\tikzset{>=Stealth}
\theoremstyle{plain}
\numberwithin{equation}{section}
\newtheorem{theorem}{Theorem}[section]
\newtheorem{corollary}[theorem]{Corollary}
\newtheorem{conjecture}[theorem]{Conjecture}
\newtheorem{lemma}[theorem]{Lemma}
\newtheorem{proposition}[theorem]{Proposition}
\theoremstyle{definition}
\newtheorem{definition}[theorem]{Definition}
\newtheorem{remark}[theorem]{Remark}
\newtheorem{example}[theorem]{Example}
\newtheorem*{theorem*}{Theorem}
\newenvironment{manualtheorem}[1]{%

\begin{theorem}
}{\end{theorem}\addtocounter{theorem}{-1}}
\newcommand{\R}{\mathbb{R}}
\newcommand{\A}{\mathbb{A}}
\renewcommand{\P}{\mathbb{P}}
\newcommand{\Z}{\mathbb{Z}}
\newcommand{\C}{\mathbb{C}}
\newcommand{\Q}{\mathbb{Q}}
\newcommand{\ip}[1]{\left\langle #1 \right\rangle}
\newcommand{\gen}[1]{\left\langle #1 \right\rangle}
\newcommand{\word}[1]{\textbf{#1}}
\DeclareMathOperator{\GL}{GL}
\DeclareMathOperator{\Hom}{Hom}
\DeclareMathOperator{\Spec}{Spec}
\DeclareMathOperator{\Proj}{Proj}
\DeclareMathOperator{\conv}{conv}
\DeclareMathOperator{\supp}{supp}
\DeclareMathOperator{\im}{img}
\title{Toric varieties modulo reflections}
\author{Colin Crowley, Tao Gong, and Connor Simpson}
\begin{document}

\maketitle

\begin{abstract}
  Let $W$ be a finite group generated by reflections of a lattice $M$. 
  If an $M$-rational polytope $P \subset M \otimes_{\Z}\R$ is preserved by 
  $W$, then we show that the quotient of the projective toric variety 
  $X_P$ by $W$ is isomorphic to the toric variety $X_{P \cap D}$, where 
  $D$ is a fundamental domain for the action of $W$. This answers a 
  question of Horiguchi-Masuda-Shareshian-Song, and recovers 
  results of Blume, of the second author, and of Gui-Hu-Liu. 
  We also study quotients of real toric varieties, proving that $X_P^\R / W$ is contractible when $P$ is a permutohedron.
\end{abstract}

\section{Introduction}
\subsection{} The quotient of a toric variety by a finite group need not 
be toric, even when the group acts by toric automorphisms.
For instance, the quotient of $\P^n \times \P^n$ by the $\Z/2$ action 
that swaps the factors is not toric when $n \geq 2$ \cite[Remark 
8.2]{HMJJ21}, and the quotient of a 47-torus over $\Q$ by the $\Z/47$ action that 
cycles its factors is not even rational \cite[Theorem 
1]{S69}\footnote{We thank Laurent Moret-Bailly for writing the 
\href{https://mathoverflow.net/questions/297470/is-the-quotient-of-a-toric-variety-by-a-finite-group-still-toric}{MathOverflow 
post} that made us aware of this example.}!
The existence of these examples is unsurprising: it is almost never possible to make the quotient map torus-equivariant, so there is little reason to expect a toric structure on the quotient.

Nonetheless, Lorenz \cite{L05} provides a class of finite group actions on tori with toric quotient.
Let $V$ be a finite-dimensional real vector space with a positive definite bilinear form $\gen{,}$.
A \word{reflection} in $V$ is a linear transformation of the form
\[
  s_\alpha(v) := v - 2 \frac{\ip{v, \alpha}}{\ip{\alpha,\alpha}} \alpha
\]
for some nonzero $\alpha \in V$. A \word{reflection group} in $V$ is a finite subgroup of $\GL(V)$ generated by reflections.
A finite reflection group $W$ in $V$ has a fundamental domain that is a polyhedral cone $D \subset V$.

\begin{theorem}\cite[Theorem 6.1.1]{L05}\label{lorenz}
  If $W \subset \GL(V)$ is a finite reflection group preserving a lattice $M \subset V$, then there is an isomorphism $\Z[D \cap M] \to \Z[M]^W$. 
\end{theorem}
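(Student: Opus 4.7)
My plan is to build the isomorphism as a ring map from monoid generators, using a triangularity property of orbit sums to pin down the details. The cornerstone is the classical fact, standard for finite real reflection groups, that the closed fundamental chamber $D$ is a \emph{strict} fundamental domain for $W$ on $V$: every $W$-orbit meets $D$ in exactly one point. Consequently $D \cap M$ parametrizes the $W$-orbits on $M$, and the orbit sums
\[
  \sigma_m \;:=\; \sum_{m' \in W \cdot m} x^{m'} \qquad (m \in D \cap M)
\]
form a $\Z$-basis of $\Z[M]^W$, in natural bijection with the obvious $\Z$-basis $\{x^m\}_{m \in D \cap M}$ of $\Z[D \cap M]$. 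So the two rings are already isomorphic as free $\Z$-modules; the job is to match the ring structures.

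By Gordan's lemma applied to the rational polyhedral cone $D$, the monoid $D \cap M$ is finitely generated; pick generators $m_1,\ldots,m_r$. My candidate is the $\Z$-algebra map $\phi \colon \Z[D \cap M] \to \Z[M]^W$ defined by $\phi(x^{m_i}) = \sigma_{m_i}$. The technical core is a triangularity lemma under the dominance partial order $\preceq$ on $D \cap M$ (where $m' \preceq m$ iff $m - m'$ is a non-negative integer combination of the simple roots of $W$, i.e., the inward normals to the walls of $D$): for any decomposition $m = \sum a_i m_i$ with $a_i \in \Z_{\geq 0}$,
\[
  \prod_i \sigma_{m_i}^{a_i} \;=\; \sigma_m \;+\; \sum_{m' \prec m} n_{m'}^{(a)} \, \sigma_{m'},
\]
with $n_{m'}^{(a)} \in \Z_{\geq 0}$. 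The unit leading coefficient is the crucial input: since each $m_i$ is dominant, $w \cdot m_i \preceq m_i$ for every $w \in W$ with equality iff $w$ fixes $m_i$, so the only tuple of $W$-translates of the $m_i$'s summing to $m$ is the trivial one.

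The main obstacle I expect is \emph{well-definedness} of $\phi$: if $\sum a_i m_i = \sum b_i m_i$ in $M$, I must verify $\prod \sigma_{m_i}^{a_i} = \prod \sigma_{m_i}^{b_i}$ in $\Z[M]^W$, equivalently that $n_{m'}^{(a)} = n_{m'}^{(b)}$ for all $m' \prec m$. I would attempt this by induction on $\preceq$, comparing the lower corrections; alternatively, one can reduce first to the essential case $V^W = 0$ by inverting the generators $m_i \in V^W$ (for which $\sigma_{m_i} = x^{m_i}$ is automatically a unit), where the pointed cone $D$ admits a cleaner monoid presentation by its extreme rays. Once $\phi$ is known to be well-defined, its iso properties follow easily from the same triangularity: surjectivity, by induction on $\preceq$, since the displayed formula solves for $\sigma_m$ in terms of $\prod \sigma_{m_i}^{a_i}$ and strictly smaller orbit sums already in the image; and injectivity, because $\phi$ carries $\{x^m\}$ to a family that is upper-unitriangular in $\{\sigma_m\}$ (after refining $\preceq$ to a total order), hence itself a $\Z$-basis.
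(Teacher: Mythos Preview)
Your proposal has a genuine gap at precisely the point you flag as the ``main obstacle'': the ring map $\phi$ determined by $\phi(x^{m_i}) = \sigma_{m_i}$ is generally \emph{not} well-defined, because the equality $\prod_i \sigma_{m_i}^{a_i} = \prod_i \sigma_{m_i}^{b_i}$ that you propose to verify is simply false. Take $W = S_3$ acting on the root lattice $M = Q$ of type $A_2$. Then $D \cap M$ is minimally generated by $m_1 = \lambda_1 + \lambda_2$, $m_2 = 3\lambda_1$, $m_3 = 3\lambda_2$, subject to the relation $3m_1 = m_2 + m_3$. Specializing all $x^m$ to $1$ gives $\sigma_{m_1}^3 \mapsto 6^3 = 216$ but $\sigma_{m_2}\sigma_{m_3} \mapsto 3 \cdot 3 = 9$, so $\sigma_{m_1}^3 \neq \sigma_{m_2}\sigma_{m_3}$. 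Your proposed induction on $\preceq$ cannot repair this, and your alternative reduction to $V^W = 0$ does not help either, since this example already has $V^W = 0$. The triangularity lemma you state is correct, but it only says the two products share the \emph{leading} term $\sigma_m$; the lower terms $n_{m'}^{(a)}$ genuinely depend on the decomposition, not just on $m$.

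The paper's argument avoids this trap by first enlarging $M$ to the weight lattice $\Lambda \supset M$. The point is that $D \cap \Lambda_\circ$ is the \emph{free} commutative monoid on the fundamental weights $\lambda_1,\ldots,\lambda_r$, so the map $\Psi_\circ$ sending $\chi^{\lambda_i}$ to its orbit sum is automatically a well-defined ring homomorphism; your triangularity argument then shows it is an isomorphism onto $\Z[\Lambda_\circ]^W$. The remaining work is to prove that $\Psi$ carries the subring $\Z[D \cap M]$ onto $\Z[M]^W$, which is the content of the paper's \cref{restriction} and the proof of \cref{main} with $S = M$: one checks, using $\Delta \subset M$ and the convexity statement of \cref{groups:ordering:convex}, that for $u \in D \cap M$ every term in $\supp \Psi(\chi^u)$ again lies in $M$. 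The moral is that a multiplicative map of the kind you want exists only after passing to the weight lattice; on $\Z[D \cap M]$ itself the isomorphism is the \emph{restriction} of $\Psi$, and its value on a monomial $\chi^u$ is not the orbit sum $\sigma_u$ but rather the product $\prod_i \Psi_\circ(\chi^{\lambda_i})^{b_i}$ for the unique expansion $u = z + \sum b_i \lambda_i$.
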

The semigroup $D \cap M$ is an example of a \word{saturated affine semigroup}, i.e. a finitely generated $\Z_{\geq 0}$-submodule $S \subset M$ such that for all $k \in \Z_{\geq 0}$ and $m \in M$, $km \in S$ implies $m \in S$.
Saturated affine semigroup algebras are precisely the coordinate rings of normal affine toric varieties \cite[Theorem 1.3.5]{CLS11}, so we may restate \cref{lorenz} as:
\begin{manualtheorem}{\ref{lorenz}$'$}\cite[Theorem 6.1.1]{L05}\label{lorenz:torus}
    If $W \subset \GL(V)$ is a reflection group preserving a lattice $M \subset V$, and $T = \Spec \Z[M]$, then there is an isomorphism of algebraic varieties $T / W \cong \Spec \Z[D \cap M]$.
\end{manualtheorem}
We emphasize that the quotient map is essentially never toric.

\subsection{} Our first result (illustrated in \cref{quotient-example}) is a generalization of
Lorenz's theorem. 
 
\begin{theorem}\label{main}
If $W \subset \GL(V)$ is a reflection group preserving a lattice $M \subset V$ and a saturated affine semigroup $S \subset M$, then
  there is an isomorphism $\Z[D \cap S] \to \Z[S]^W$.
\end{theorem}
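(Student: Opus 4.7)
Plan: I would construct the desired isomorphism as the restriction of Lorenz's isomorphism $\phi\colon \Z[D \cap M] \to \Z[M]^W$ from \cref{lorenz}. Setting up: $D \cap S$ is a submonoid of $M$ (intersection of a convex cone with a semigroup), so $\Z[D \cap S] \hookrightarrow \Z[D \cap M]$ is a subring; and $W$-equivariance of $\Z[S] \hookrightarrow \Z[M]$ identifies $\Z[S]^W$ with $\Z[S] \cap \Z[M]^W$ inside $\Z[M]^W$.

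The key ingredient is a support-control lemma: for every $m \in D \cap M$, the element $\phi([m]) \in \Z[M]$ is supported on the lattice points of the orbit polytope $\conv(W \cdot m)$. Granting this, the inclusion $\phi(\Z[D \cap S]) \subset \Z[S]^W$ follows from saturation: $S = M \cap C$ for a $W$-stable closed convex rational polyhedral cone $C$, and for $m \in D \cap S$ one has $W \cdot m \subset C$, so $\conv(W \cdot m) \subset C$ by convexity, yielding $\supp(\phi([m])) \subset M \cap C = S$. For surjectivity, $\Z[S]^W$ has a $\Z$-basis of orbit sums $\{\sigma_n : n \in D \cap S\}$, and the support lemma produces a triangular expansion $\phi([m]) = \sigma_m + \sum_{n \prec m} c_{m,n}\sigma_n$, where $n \prec m$ means $n \in D \cap M \cap \conv(W \cdot m)$ with $n \neq m$. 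This order is well-founded (any chain lies in the finite set $\conv(W \cdot m_0) \cap M$) and, by the same convexity argument, has the property that $n \prec m$ and $m \in D \cap S$ imply $n \in D \cap S$; so descending induction expresses each $\sigma_n$ with $n \in D \cap S$ as $\phi$ of an element of $\Z[D \cap S]$. Injectivity is inherited from $\phi$.

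The main obstacle is proving the support-control lemma. Since Lorenz's $\phi$ sends each monoid generator $g$ of $D \cap M$ to its orbit sum $\sigma_g$, a decomposition $m = \sum a_i g_i$ gives $\phi([m]) = \prod_i \sigma_{g_i}^{a_i}$, whose monomials are the sums $\sum_{i,j} w_{ij}\, g_i$ ranging over tuples $(w_{ij}) \in W^{\sum a_i}$. Showing that each such sum lies in $\conv(W \cdot m)$ is a classically flavored statement about orbit polytopes of finite reflection groups, akin to the Schur--Horn and Kostant majorization theorems; the explicit lattice setting here should admit a fairly direct verification, perhaps by expressing each such monomial as a convex combination of $W$-translates of $m$ using the dominance structure of $D$.
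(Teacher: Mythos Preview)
Your overall strategy---restrict an ambient isomorphism, prove a support-control lemma placing $\supp\phi(\chi^m)$ inside $M\cap\conv(Wm)$, then exploit the unitriangular expansion $\phi(\chi^m)=\sigma_m+(\text{lower})$ together with the description $S=C\cap M$ for a $W$-stable convex cone $C$---is exactly the paper's approach. The paper's \cref{restriction} is your support-control lemma, and its surjectivity argument (find a maximal bad exponent and show it survives) is the contrapositive of your well-founded induction.

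The one substantive gap is in your sketch of the support lemma itself. You assert that Lorenz's $\phi$ sends each monoid generator $g$ of $D\cap M$ to its orbit sum $\sigma_g$, and then expand $\phi(\chi^m)=\prod_i\sigma_{g_i}^{a_i}$. But this is not how the isomorphism is built, and in general it cannot be: the monoid $D\cap M$ is not free, so prescribing $g\mapsto\sigma_g$ on a generating set does not obviously yield a ring homomorphism (you would have to check every relation among the $g_i$, which is as hard as the theorem). The standard construction, reproduced in the paper as \cref{bourbaki}, instead passes to the weight lattice $\Lambda\supset M$ of \cref{goodroots}, where $D\cap\Lambda_\circ$ \emph{is} freely generated by the fundamental weights $\lambda_i$, and sets $\Psi(\chi^{\lambda_i})=\sigma_{\lambda_i}$. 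For $m=z+\sum_i b_i\lambda_i\in D\cap M$ this gives $\Psi(\chi^m)=\chi^z\prod_i\sigma_{\lambda_i}^{b_i}$, which is typically not $\sigma_m$. The payoff is that your ``classically flavored'' convexity statement becomes routine: each monomial in the product is $z+\sum_{i,j}w_{ij}\lambda_i$, and \cref{groups:ordering:orbit} applied to each $\lambda_i$ plus \cref{groups:ordering:convex} lands it in $\conv(Wm)$; the fact that it also lands back in $M$ (not just $\Lambda$) is exactly because the simple roots were chosen in $M$ via \cref{goodroots}. So the missing idea in your sketch is this passage to $\Lambda$.
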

Equivalently, we may state \cref{main} as:
\begin{manualtheorem}{\ref{main}$'$}\label{main:affine}
If $W \subset \GL(V)$ is a reflection group preserving a lattice $M \subset V$ and a saturated affine semigroup $S \subset M$, and $X = \Spec \Z[S]$, then
$X/W$ is isomorphic to the normal affine toric variety $\Spec \Z[D \cap S]$.
\end{manualtheorem}

Projective toric varieties with dense torus $\Spec \Z[M]$ are given by full-dimensional $M$-rational polytopes $P \subset V$\cite[Chapter 2]{CLS11}.\footnote{An $M$-rational polytope is a polytope with all vertices in a fractional multiple of $M$. We use rational polytopes instead of lattice polytopes, because the latter class is not preserved under intersection with $D$.}
A consequence of \cref{main} is the following statement, conjectured in \cite[Question 8.3]{HMJJ21}.

\begin{corollary}\label{main:projective}
Let $W \subset \GL(V)$ be a reflection group preserving a lattice $M \subset V$ and a full-dimensional $M$-rational polytope $P \subset V$, with associated projective toric variety $X_P$. Then the quotient $X_P / W$ is isomorphic to the projective toric variety $X_{P \cap D}$.
\end{corollary}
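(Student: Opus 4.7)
The plan is to reduce \Cref{main:projective} to \Cref{main} by applying the affine theorem to the saturated semigroup that encodes $X_P$ as a $\Proj$. Recall that a full-dimensional lattice polytope $P \subset V$ gives rise to the saturated affine semigroup $S := \cone(P \times \{1\}) \cap (M \oplus \Z) \subset M \oplus \Z$, graded by the second coordinate, and that $X_P \cong \Proj \Z[S]$. First, I would extend the $W$-action from $V$ to $V \oplus \R$ by letting $W$ act trivially on the second factor. Equipping $V \oplus \R$ with the orthogonal sum of the given form on $V$ and the standard form on $\R$, each reflection $s_\alpha$ of $V$ extends to the reflection with root $(\alpha, 0)$ of $V \oplus \R$. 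Thus $W$ remains a reflection group on $V \oplus \R$, it preserves the lattice $M \oplus \Z$, and because it preserves $P$ it also preserves the cone $\cone(P \times \{1\})$ and hence the semigroup $S$.

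Second, I would apply \Cref{main} to $S$. The fundamental domain for the extended $W$-action on $V \oplus \R$ is $D \oplus \R$, so \Cref{main} yields $\Z[S]^W \cong \Z[(D \oplus \R) \cap S]$. Next I would identify the right-hand semigroup explicitly: an element $(v, k) \in S$ with $k > 0$ has $v \in kP \cap M$, and $(v, k) \in D \oplus \R$ iff $v \in D$ iff $v/k \in D$ (using that $D$ is a cone), iff $v \in k(P \cap D)$. Therefore $(D \oplus \R) \cap S = \cone((P \cap D) \times \{1\}) \cap (M \oplus \Z)$, which is precisely the graded semigroup whose $\Proj$ is $X_{P \cap D}$.

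Finally, because $W$ acts trivially on the second coordinate, it preserves the grading on $\Z[S]$, and the induced action on $X_P = \Proj \Z[S]$ is by a graded action on the coordinate ring. For a finite group acting by graded automorphisms on a projective scheme, taking invariants commutes with $\Proj$, so $X_P / W \cong \Proj(\Z[S]^W) \cong X_{P \cap D}$.

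Given \Cref{main}, the corollary is essentially formal. The step requiring the most care is checking that the trivial extension of $W$ to $V \oplus \R$ is still a reflection group with respect to a compatible positive definite form, so that the hypotheses of \Cref{main} are actually met after passing from $V$ to $V \oplus \R$; the rest is the routine identification of the fundamental domain as $D \oplus \R$, the identification of the invariant semigroup, and the compatibility of $\Proj$ with invariants for a finite graded action.
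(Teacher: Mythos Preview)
Your proposal is correct and follows essentially the same approach as the paper: both pass to the cone semigroup $S \subset M \oplus \Z$ (the paper writes it as $\Z \times M$), extend the $W$-action trivially on the extra factor so that the fundamental domain becomes $D \times \R$, apply \Cref{main} to obtain a graded isomorphism $\Z[S]^W \cong \Z[(D \times \R) \cap S] = \Z[t(P \cap D)]$, and then take $\Proj$. The only difference is that you spell out a couple of points the paper leaves implicit (that the extended $W$ is still a reflection group for the orthogonal-sum form, and that $\Proj$ commutes with finite graded invariants), but the argument is the same.
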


The quotients we refer to in \cref{lorenz:torus,main:affine,main:projective} are ``geometric quotients'' in the sense of Geometric Invariant Theory (GIT), which always exist for the action of a finite group on a quasi-projective variety \cite[Example 6.1]{D03}. Over the complex numbers, the analytic topology of $X/W$ is the quotient topology of the $W$-action on the analytic topology of $X$ by \cite[Remark 1.7]{neemanTopologyQuotientVarieties1985}.

\medskip
Horiguchi, Masuda, Shareshian, and Song were motivated to conjecture \cref{main:projective} after establishing an isomorphism of rational cohomology rings $H^*(X_P/W; \Q) \cong H^*(X_{P \cap D}; \Q)$ in the case when $W$ is a parabolic subgroup of an irreducible reflection group $W'$ of classical type, and $P$ is a \word{$W'$-permutohedron} (i.e.\ the vertices of $P$ are $\{w(u) : w \in W'\}$, with $u \in M$ a point on which $W'$ acts freely) \cite[Theorem 1.1]{HMJJ21}.
Gui-Hu-Liu \cite[Theorem 1.2]{GHL24} and Song \cite[Theorem 1.1]{song2022toric} subsequently established similar cohomological isomorphisms when $P$ is a permutohedron of any Lie type, and when $P$ is two-dimensional, respectively.
Meanwhile, Gong \cite[Theorems 1.1, 1.4]{G24} obtained a topological result, proving that $X_P/W$ and $X_{P \cap D}$ are homotopy equivalent when $P$ is a $W'$-permutohedron and $W \subset W'$ is a parabolic subgroup.
All of these statements are recovered by \cref{main:projective}.

Further special cases of \cref{main:projective} include the well-known isomorphisms
$(\P^1)^n / S_n \cong \P^n$ and
$\P^{n-1} / S_n \cong \P(1,2, \ldots, n)$,
as well as results of Blume \cite[Theorems 3.1, 5.3, 5.9]{B15}, which state that the projective toric variety associated to a permutohedron of type A, B, or C modulo the corresponding Weyl group is toric.


\subsection{}
We next study the topological quotient $X_P^\R/W$ of the real points $X_P^\R$ of $X_P$.
The algebraic results described above are of little avail in this setting, as the GIT quotient of a variety by a finite group may not coincide with the topological quotient when working over a field that is not algebraically closed.

Emulating \cite{song2022toric}, we first study the surface case.
\begin{theorem}\label{quotients of real surfaces}
  If $\dim P = 2$ and $W$ is the Weyl group of an irreducible crystallographic root system of rank 2, then $X_P^\R / W$ is homotopy equivalent to $\bigvee_{\ell-1} S^1$, where $\ell$ is the number of vertices of $P$ that lie in $D$.
\end{theorem}
We then consider the setting of \cite{GHL24}, in which $P$ is a $W$-permutohedron.
The topology of real permutohedral varieties is well-studied and combinatorially intricate (see, e.g. \cite{henderson2012rational,cho2019geometric,choiCohomologyRingsReal2024}).
In contrast, we show:

\begin{theorem}\label{main-thm-real}
  If $W$ is the Weyl group of an irreducible crystallographic root system, $M$ is its root lattice, and $P$ is a $W$-permutohedron, then $X_{P}^{\R}/W$ is contractible.
\end{theorem}

An immediate consequence of \cref{main-thm-real} is:
\begin{corollary}\label{trivial-rep-real}
  In the notation of \cref{main-thm-real}, the trivial representation of 
  $W$ is not a subrepresentation of $H^i(X_P^{\R}; \C)$ for $i > 0$.
\end{corollary}
\cref{trivial-rep-real} follows in types A and B from works of \cite{henderson2012rational} and \cite{cho2019geometric}, respectively, on the $\Q[W]$-module structure of $H^*(X_P^\R;\Q)$.
(We remark that these module structures also imply that quotients of $X_P^\R$ by parabolic subgroups of $W$ may have non-trivial topology. Describing these quotients remains an interesting problem.)

  Motivated by Theorems \ref{quotients of real surfaces} and \ref{main-thm-real}, we make the following conjecture on the topology of $X_P^\R/W$.
  \begin{conjecture}\label{wedgeconj}
    Suppose $W$ is the Weyl group of an irreducible crystallographic root system that spans $V$, and $M$ is its root lattice.
    Then $X_P^\R / W$ is homotopy equivalent to a wedge of spheres, and has Euler characteristic $\chi(X_P^\R / W) = 2-\ell$, where $\ell$ is the number of vertices of $P$ that lie in $D$.
  \end{conjecture}

  Our proofs of Theorems \ref{main}, \ref{main:affine}, and \cref{main:projective}, uniform with respect to $W$, $M$, and $P$, stand starkly against the ad-hoc approach we employ to prove Theorems \ref{quotients of real surfaces} and \ref{main-thm-real}. We hope that \cref{wedgeconj} will inspire the development of a more conceptual approach to the latter results.

\subsection*{Organization}
In \cref{groups}, we review finite reflection groups, then establish \cref{main,main:affine,main:projective} in \cref{proofs}.
In the remainder of the paper, we turn our attention to real points. We review topological models for $X_P^\R$ and $X_P^\R / W$, then prove \cref{quotients of real surfaces} in \cref{sec:realpts}. Finally, in \cref{Higher dimensional cases}, we study quotients of real permutohedral varieties and prove \cref{main-thm-real}.

\subsection*{Acknowledgements}
We are grateful to Gui Tao for making us aware of \cite[Question 8.1]{HMJJ21}, and to Hiraku Abe \& Haozhi Zeng for comments that improved the exposition.
We also thank Chiara Damiolini \& Sam Payne for discussing early 
versions of this project. TG is indebted to Matthias Franz for much help, and to Suyoung Choi, Ajneet Dhillon and Nicole Lemire  for discussions.
CC and CS thank the Simons Foundation for support; CC is also supported by NSF grant DMS-2039316.

\section{Finite reflection groups}\label{groups}
We review the theory of finite reflection groups, following \cite{H90}.
\subsection{} Let $V$ be a real vector space equipped with a positive definite symmetric bilinear form $\ip{,}$.
For $R = \Z_{\geq 0}, \Z, \R$, we write $R\gen{v_1, \ldots, v_k}$ for the $R$-span of vectors $v_1, \ldots, v_k$ (the use of angle brackets for both span and inner product should cause no confusion).
The \word{reflection} by a nonzero $\alpha \in V$ is the linear transformation given by 
\[
  s_{\alpha}(v) := v - 2 \frac{\ip{v, \alpha}}{\ip{\alpha,\alpha}} \alpha.
\]
A \word{reflection group} in $V$ is a subgroup of $\GL(V)$ generated by reflections.
All reflection groups appearing in this paper are assumed finite.

\subsection{}
A \word{root system} is a finite set $\Phi \subset V$ such that for all $\alpha \in \Phi$,
\begin{enumerate}
\item $\phi \cap \R\gen{\alpha} = \{ \alpha, -\alpha\}$, and
\item $s_\alpha(\Phi) = \Phi$.
\end{enumerate}
Elements of $\Phi$ are \word{roots}.
If $\Phi$ is a root system, then the subgroup $W$ of $\GL(V)$ generated by $\{s_\alpha : \alpha \in \Phi\}$ is a finite reflection group, and all such groups arise in this manner.
Any root system contains a \word{simple system}, that is,  is a linearly independent subset $\Delta \subset \Phi$ such that
\begin{enumerate}
\item $\Delta$ and $\Phi$ span the same subspace of $V$, and
\item every element of $\Phi$ is real linear combination of elements of $\Delta$ with coefficients all of the same sign.
\end{enumerate}
Elements of $\Delta$ are \word{simple roots}. Properties of $\Delta$ include:
\begin{enumerate}
\item $W$ is generated by $\{s_\alpha : \alpha \in \Delta\}$  \cite[Theorem 1.5]{H90}.
\item \label{fundamental}  $D = \{ v \in V : \ip{v,\alpha} \geq 0, \; \alpha \in \Delta\}$ is a fundamental domain for the action of $W$ on $V$ (i.e. each $W$-orbit intersects $D$ in exactly one point) \cite[Theorem 1.12]{H90}.
\end{enumerate}

\begin{remark}
  The definition of ``reflection'' used here differs from the one used in \cite{L05}, but the two definitions coincide under the hypotheses of \cref{lorenz}.
  Lorenz states \cref{lorenz} for a subgroup $W$ of $\GL(M)$ generated by transformations $s$ such that the fixed space of $s$ on $\Q \otimes M$ has codimension 1.
  Since $s$ lies in $\GL(M)$, it has determinant $\pm 1$; therefore, $s$ is conjugate over $\Q$ to a diagonal matrix with entries $1, 1, 1, \ldots, 1, -1$.
  Replacing $\ip{,}$ with $\sum_{(w_1, w_2) \in W \times W} \ip{w_1 -, w_2-}$, we may assume that the inner product on $V = \R \otimes M$ is $W$-invariant.
  If $\alpha$ is a $-1$-eigenvector of $s$ and $v$ is fixed by $s$, then
  \[ 2\ip{\alpha,v} = \ip{\alpha,v} + \ip{s\alpha, sv} = \ip{\alpha,v} + \ip{-\alpha, v} = 0, \]
  so $s = s_\alpha$, which is a reflection as defined here.
\end{remark}
\begin{remark}
  The definition of ``root system'' used here is that of \cite{H90}, but differs slightly from the definition used in some other sources.
  To assure the reader that this discrepancy causes no problem, we will provide proofs for results from references other than \cite{H90}.
\end{remark}
\subsection{}\label{groups:ordering}
Fix a simple system $\Delta$ for $W$, with corresponding fundamental domain $D$.
Define a partial order on $V$ by $u \geq v$ if and only if $u-v$ is a non-negative real linear combination of simple roots.
\begin{proposition}\cite[Lemma 1.12]{H90}\label{groups:ordering:orbit}
  If $v \in D$, then $v - wv \in \R_{\geq 0}\gen{\alpha_1, \ldots, \alpha_r}$ for all $w \in W$. In particular, $v \geq wv$.
\end{proposition}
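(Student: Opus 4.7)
The plan is to induct on the length $\ell(w)$ of $w$ in the generating set of simple reflections $\{s_\alpha : \alpha \in \Delta\}$. The base case $\ell(w) = 0$ is trivial: $w$ is the identity and $v - wv = 0$. For the inductive step, factor $w = s_\alpha w'$ with $\alpha \in \Delta$ and $\ell(w') = \ell(w) - 1$; expanding the definition of $s_\alpha$ gives
\[
v - wv = (v - w'v) + \frac{2\ip{w'v,\alpha}}{\ip{\alpha,\alpha}}\, \alpha.
\]
By the inductive hypothesis the first summand is a non-negative combination of simple roots, so the proposition reduces to showing that the coefficient of the new $\alpha$ term is non-negative, i.e., that $\ip{w'v,\alpha} \geq 0$.

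For this I would invoke the standard length-vs.-positivity criterion from Coxeter theory: for $u \in W$ and $\alpha \in \Delta$, $\ell(s_\alpha u) > \ell(u)$ if and only if $u^{-1}\alpha$ lies in the positive root system $\Phi^+$ \cite[Section 1.7]{H90}. Applied to $u = w'$ (where $\ell(s_\alpha w') = \ell(w) > \ell(w')$), this yields $(w')^{-1}\alpha = \sum_{i=1}^r c_i \alpha_i$ with each $c_i \geq 0$. Pairing with $v$ and using $\ip{v,\alpha_i} \geq 0$ (which follows from $v \in D$) gives
\[
\ip{w'v,\alpha} = \ip{v,(w')^{-1}\alpha} = \sum_i c_i \ip{v,\alpha_i} \geq 0,
\]
which closes the induction.

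The main obstacle is the length-vs.-positivity correspondence used above, but that is a foundational input from Coxeter theory (following from the exchange condition) which we simply cite; everything else is a one-line computation with the definition of a reflection. The argument as written naturally produces a non-negative real combination of simple roots, directly giving the ``in particular'' clause $v \geq wv$; the apparent strengthening to integer coefficients in the proposition's display would require, in addition, the observation that each inductive step adds an integer multiple of a simple root, which is the case whenever $v$ sits in a lattice on which $W$ acts by integer matrices in the basis $\Delta$.
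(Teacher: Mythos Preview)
The paper does not give its own proof here; it simply cites \cite[Lemma 1.12]{H90}. Your argument---induction on $\ell(w)$, writing $w = s_\alpha w'$ with $\ell(w') = \ell(w)-1$ and invoking the criterion $\ell(s_\alpha w') > \ell(w') \Longleftrightarrow (w')^{-1}\alpha \in \Phi^+$---is precisely the standard proof found in that reference, so there is nothing substantive to contrast.

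Your closing caveat is correct and worth keeping. The displayed claim $v - wv \in \Z_{\geq 0}\gen{\alpha_1,\ldots,\alpha_r}$ is false for arbitrary $v \in D$ (an irrational multiple of a fundamental weight already breaks it), and your induction only produces non-negative \emph{real} coefficients, which is exactly what Humphreys proves and exactly what the partial order $\geq$ demands. The integral refinement does hold once $v \in \Lambda$: then $w'v \in \Lambda$ as well (since $W$ preserves $\Lambda$), so the increment $\tfrac{2\ip{w'v,\alpha}}{\ip{\alpha,\alpha}}$ is an integer for each simple $\alpha$. That is the only situation in which the paper actually uses the integral version (namely $v = \lambda_i$ in the support lemma), so nothing downstream is affected, but your qualification is the right one.
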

Let $\conv(A)$ denote the convex hull of a subset $A \subset V$.
\begin{proposition}\cite[Proposition VI.2.4(ii)]{BtD85}\label{groups:ordering:convex}
  If $u, v \in D$, then $u \geq v$ if and only if $\conv(Wu)$ contains $v$.
\end{proposition}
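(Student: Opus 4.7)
The plan is to prove the two implications separately. The reverse implication is a direct application of \cref{groups:ordering:orbit}; the forward implication requires a hyperplane separation argument combined with a careful choice of dominant orbit representative.

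For ``$v \in \conv(Wu) \Rightarrow u \geq v$'': I would write $v$ as a convex combination $\sum_{w \in W} c_w \cdot wu$, so that $u - v = \sum_w c_w (u - wu)$. Each summand lies in $\Z_{\geq 0}\gen{\alpha_1, \ldots, \alpha_r}$ by \cref{groups:ordering:orbit}, so $u - v \in \R_{\geq 0}\gen{\alpha_1, \ldots, \alpha_r}$, giving $u \geq v$.

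For the converse, I would argue by contradiction. Assume $u \geq v$ but $v \notin \conv(Wu)$. Since $Wu$ is finite, $\conv(Wu)$ is compact, and hyperplane separation produces a $\lambda \in V$ with $\ip{\lambda, v} > \ip{\lambda, wu}$ for every $w \in W$. The crux of the argument is to pass to a dominant representative $\lambda^* \in W\lambda \cap D$, which exists because $D$ is a fundamental domain. The key technical input, itself a consequence of \cref{groups:ordering:orbit}, is that for any $\mu, y \in D$ and $w' \in W$ one has $\ip{w'\mu, y} \leq \ip{\mu, y}$ (pair $\mu - w'\mu \in \R_{\geq 0}\gen{\alpha_1,\ldots,\alpha_r}$ with $y \in D$). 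Combined with the $W$-invariance of $\ip{,}$ and the identity $\ip{\lambda, wu} = \ip{w^{-1}\lambda, u}$, this yields both $\max_{w \in W} \ip{\lambda, wu} = \ip{\lambda^*, u}$ and $\ip{\lambda^*, v} \geq \ip{\lambda, v}$, so the separation hypothesis forces $\ip{\lambda^*, u - v} < 0$. On the other hand, $u - v \in \R_{\geq 0}\gen{\alpha_1, \ldots, \alpha_r}$ together with $\lambda^* \in D$ forces $\ip{\lambda^*, u - v} \geq 0$, a contradiction.

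The main obstacle is bridging the analytic separating functional and the combinatorics of simple roots. Passing from $\lambda$ to the dominant representative $\lambda^*$ preserves the separation inequality (thanks to $u, v \in D$) while producing a functional with non-negative pairing against every simple root, so the geometric separation converts directly into an arithmetic contradiction with the order condition $u \geq v$.
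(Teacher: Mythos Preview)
Your proposal is correct and follows essentially the same route as the paper: the reverse implication is an immediate consequence of \cref{groups:ordering:orbit}, and the forward implication uses hyperplane separation followed by replacing the separating functional by its dominant $W$-translate (your $\lambda^*$, the paper's $l' = w^{-1}l \in D$) so that the pairing with $u-v$ becomes non-negative and contradicts the strict separation. The only cosmetic difference is that the paper phrases the forward direction as a contrapositive rather than a contradiction.
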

\begin{proof}
  If $v \in \conv(Wu)$, then there are non-negative real numbers $c_w$, $w \in W$, such that $\sum_w c_w = 1$ and $v = \sum_w c_w wu$.
  By \cref{groups:ordering:orbit}, $wu \leq u$, so
  \[
    v = \sum_w c_w wu \leq \sum_w c_w u = u.
  \]
  Conversely, suppose $v \not \in \conv(Wu)$. Then there is $l \in V$ such that for all $w \in W$, $\ip{v, l} > \ip{wu, l} = \ip{u, w^{-1} l}$.
  In particular, if $l' = w^{-1} l \in D$, then by \cref{groups:ordering:orbit},
  \[
    \ip{v, l'} \geq \ip{w^{-1} v, l'} = \ip{v, wl'} > \ip{u, l'},
  \]
  hence $\ip{u - v, l'} < 0$.
  This proves that $u \not \geq v$ because an element of $\R_{\geq 0}\gen{\alpha_1, \ldots, \alpha_r}$ has non-negative inner product with all elements of $D$.
\end{proof}

\subsection{}\label{groups:crystallographic} Call a finite reflection group $W$ \word{crystallographic} if it preserves a lattice in $V$ (i.e. the $\Z$-span of a basis of $V$).
Given a simple system $\Delta = \{\alpha_1, \ldots, \alpha_r\}$ for $W$, the associated \word{weight group} is
\[
  \Lambda = \{ v \in V : 2 \frac{\ip{v, \alpha_i}}{\ip{\alpha_i, \alpha_i}} \in \Z, \, 1 \leq i \leq r \}.
\]
The weight group decomposes as
\[
  \Lambda = Z \oplus \Z\gen{\lambda_1, \ldots, \lambda_r},
\]
where $Z = \cap_{i=1}^r \alpha_i^\perp$ and the vectors $\lambda_1, \ldots, \lambda_r \in Z^\perp$ are the \word{fundamental weights}, determined by
\[
  2 \frac{\ip{\lambda_i, \alpha_j}}{\ip{\alpha_j, \alpha_j}} = \begin{cases} 1, & i=j\\ 0, & \text{else.}\end{cases}
\]
The fundamental weights are linearly independent.
Likewise, the fundamental domain decomposes as
\[
  D = Z \times \R_{\geq 0}\gen{\lambda_1, \ldots, \lambda_r}.
\]

\begin{proposition}\cite[\S15]{speyernotes} \label{goodroots}
  Let $W$ be a finite reflection group that preserves a lattice $M \subset V$.
  There is a simple system $\Delta$ for $W$ such that
  \[
    \Delta \subset M \subset \Lambda.
  \]
  Moreover, the action of $W$ preserves $\Lambda$.
\end{proposition}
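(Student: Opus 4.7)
The plan is to construct a simple system $\Delta\subset M$ by hand and then verify the two integrality statements directly. After replacing $\ip{,}$ by $\sum_{w\in W}\ip{w-,w-}$ if necessary, we may assume the form is $W$-invariant. For each reflection hyperplane $H$ of $W$, the orthogonal line $H^\perp$ meets $M$ in a rank-one sublattice: since $M$ spans $V$, some $m\in M$ pairs non-trivially with any generator of $H^\perp$, and then $m - s_H(m)\in M\cap H^\perp$ is nonzero. Pick a primitive generator $\alpha_H\in M\cap H^\perp$ and set $\Phi = \{\pm\alpha_H\}_H$. Since $W$ permutes its reflection hyperplanes and sends primitive lattice vectors to primitive lattice vectors, $\Phi$ is a finite $W$-invariant subset of $M$; together with $\R\alpha_H\cap\Phi = \{\pm\alpha_H\}$, this makes $\Phi$ a root system whose associated reflection group is $W$. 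Choose any simple system $\Delta = \{\alpha_1,\ldots,\alpha_r\}\subset\Phi$; then $\Delta\subset M$ by construction.

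For the inclusion $M\subset\Lambda$, fix $m\in M$ and a simple root $\alpha_i$. Because $s_{\alpha_i}$ preserves $M$,
\[
s_{\alpha_i}(m) - m = -2\frac{\ip{m,\alpha_i}}{\ip{\alpha_i,\alpha_i}}\,\alpha_i \;\in\; M\cap\R\alpha_i = \Z\alpha_i,
\]
where the last equality uses primitivity of $\alpha_i$. Hence $2\ip{m,\alpha_i}/\ip{\alpha_i,\alpha_i}\in\Z$, so $m\in\Lambda$.

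For the $W$-invariance of $\Lambda$, it suffices to check that each simple reflection $s_{\alpha_j}$ preserves $\Lambda$, since these generate $W$. Given $v\in\Lambda$, write $s_{\alpha_j}(v) = v - k\alpha_j$ with $k = 2\ip{v,\alpha_j}/\ip{\alpha_j,\alpha_j}\in\Z$; pairing against a simple root $\alpha_i$ gives
\[
2\frac{\ip{s_{\alpha_j}(v),\alpha_i}}{\ip{\alpha_i,\alpha_i}} = 2\frac{\ip{v,\alpha_i}}{\ip{\alpha_i,\alpha_i}} - k\cdot 2\frac{\ip{\alpha_j,\alpha_i}}{\ip{\alpha_i,\alpha_i}}.
\]
The first summand is an integer because $v\in\Lambda$, and the second is an integer because $\alpha_j\in M\subset\Lambda$ by the previous paragraph (applied to $m = \alpha_j$). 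The main obstacle, as I see it, lies in the first step: verifying that $M\cap H^\perp$ really has rank one and that the resulting $W$-orbit of primitive vectors constitutes a bona fide root system, rather than merely a $W$-invariant finite subset of $M$. Once this is in place, the two integrality claims are one-line consequences of the definition of $\Lambda$.
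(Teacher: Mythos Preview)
Your proof is correct and follows the same strategy as the paper: use $m - s_\alpha(m) \in M \cap \R\alpha$ to produce primitive roots in $M$, then derive both $M \subset \Lambda$ and $W\Lambda = \Lambda$ from primitivity. The only difference is cosmetic---the paper rescales a pre-existing simple system $\Delta_0$ to primitive multiples rather than building the whole root system $\Phi$ from all reflection hyperplanes, which lets it sidestep the root-system verification you flag as the ``main obstacle''; but your verification is already complete (distinct hyperplanes have distinct orthogonal lines, and $W$ permutes primitive lattice vectors among them), so the concern is unwarranted.
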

\begin{proof}
  Let $\Delta_0$ be a simple system for $W$.
  If $\alpha \in \Delta_0$ and $m \in M \setminus \alpha^\perp$, then
  \[
    M \ni m - s_\alpha(m) = 2 \frac{\ip{m,\alpha}}{\ip{\alpha,\alpha}} \alpha,
  \]
  so $M \cap \R \gen{\alpha} \neq \{0\}$. Consequently, $M \cap \R \gen{\alpha}$ is a rank 1 lattice with a unique generator $\alpha'$ satisfying $\ip{\alpha,\alpha'} > 0$.
  The set $\Delta = \{\alpha' : \alpha \in \Delta_0\}$ is a simple system for $W$ that is contained in $M$.

  The weight lattice $\Lambda$ associated to $\Delta$ contains $M$ because for any $\alpha \in \Delta$ and $m \in M$,
  \[
    \Z\gen{\alpha} = \R \gen{\alpha} \cap M \ni m - s_\alpha(m) = 2 \frac{\ip{m,\alpha}}{\ip{\alpha,\alpha}} \alpha.
  \]
  The action of $W$ preserves $\Lambda$ because for each $\alpha \in \Delta$ and $\lambda \in \Lambda$,
  \[
    s_\alpha(\lambda) = \lambda - 2 \frac{\ip{\lambda,\alpha}}{\ip{\alpha,\alpha}}\alpha \in \Lambda + \Z\gen{\alpha} = \Lambda,
  \]
  and the reflections associated to simple roots generate $W$.
\end{proof}

\section{Proofs of \cref{main,main:affine}, and \cref{main:projective}}\label{proofs}
Throughout this section, we fix notations:
\begin{itemize}
\item $V$ is a real vector space with positive definite symmetric bilinear form $\ip{,}$,
\item $M \subset V$ is a lattice,
\item $W \subset \GL(V)$ is a finite reflection group preserving $M$, with simple system $\Delta = \{\alpha_1, \ldots, \alpha_r\} \subset M$ as in \cref{goodroots},
\item $\lambda_1, \ldots, \lambda_r$ the corresponding fundamental weights,
\item $D = Z \times \R_{\geq 0}\gen{\lambda_1, \ldots, \lambda_r}$ the corresponding fundamental domain, and
\item $\Lambda = Z \oplus \Z\gen{\lambda_1, \ldots, \lambda_r} \supset M$ the weight group, and $\Lambda_\circ = \Z\gen{\lambda_1, \ldots, \lambda_r}$.
\end{itemize}
All future mention of ``simple roots'' will refer to elements of $\Delta$.
If $S$ is a semigroup, let $\Z[S]$ denote its semigroup algebra, i.e. the group of finite $\Z$-linear combinations of symbols $\{\chi^s : s \in S\}$ with multiplication given by $\chi^s \chi^t = \chi^{s+t}$.

\subsection{} The decompositions in \cref{groups:crystallographic} imply that
\begin{align*}
    \Z[D \cap \Lambda] &\cong \Z[Z] \otimes_\Z \Z[\Lambda_\circ] = \Z[Z] \otimes_\Z \Z[\chi^{\lambda_1}, \ldots, \chi^{\lambda_r}], \\
    \Z[\Lambda]^W &\cong \Z[Z] \otimes_\Z \Z[\Lambda_\circ]^W = \Z[Z] \otimes_\Z \Z[\chi^{\lambda_1},\ldots, \chi^{\lambda_r}]^W,
\end{align*}
and that the generators $\chi^{\lambda_1}, \ldots, \chi^{\lambda_r}$ are algebraically independent.
Consequently, there is a well-defined map of $\Z$-algebras
\[
  \Psi_\circ: \Z[D \cap \Lambda_\circ] \to \Z[\Lambda_\circ]^W
\]
determined by
\[
  \Psi_\circ(\chi^{\lambda_i}) = \sum_{v \in W\lambda_i} \chi^v, \quad 1 \leq i \leq r.
\]
This map extends uniquely to a map of $\Z[Z]$-algebras
\[
  \Psi: \Z[D \cap \Lambda] \to \Z[\Lambda]^W.
\]
\begin{theorem}\cite[Theorem VI.3.4.1]{B02}\label{bourbaki}
  $\Psi_\circ$ is an isomorphism.
\end{theorem}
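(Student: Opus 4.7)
The plan is to prove the isomorphism by a triangular change-of-basis argument. Both source and target are free $\Z$-modules whose natural bases are indexed by $D \cap \Lambda_\circ$: since $D \cap \Lambda_\circ = \Z_{\geq 0}\gen{\lambda_1, \ldots, \lambda_r}$, the source has $\Z$-basis $\{\chi^\mu\}_{\mu \in D \cap \Lambda_\circ}$; and since $W$ permutes the basis $\{\chi^v\}_{v \in \Lambda_\circ}$ of $\Z[\Lambda_\circ]$, the invariant ring $\Z[\Lambda_\circ]^W$ has a $\Z$-basis of orbit sums $m_\mu := \sum_{v \in W\mu}\chi^v$, indexed again by $\mu \in D \cap \Lambda_\circ$ thanks to property~\cref{fundamental}.

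The heart of the argument is the following triangularity claim, which I would prove next: for every $\mu = \sum_i a_i \lambda_i \in D \cap \Lambda_\circ$,
\[
  \Psi_\circ(\chi^\mu) = m_\mu + \sum_{\substack{\nu \in D \cap \Lambda_\circ \\ \nu < \mu}} c^\mu_\nu\, m_\nu, \qquad c^\mu_\nu \in \Z_{\geq 0},
\]
where $\leq$ is the partial order of \cref{groups:ordering}. To prove this, I would expand $\Psi_\circ(\chi^\mu) = \prod_i (m_{\lambda_i})^{a_i}$ as a sum of monomials $\chi^{\sum v_{i,j}}$ over tuples $(v_{i,j})$ with $v_{i,j} \in W\lambda_i$ and $1 \leq j \leq a_i$. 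By \cref{groups:ordering:orbit} each exponent is $\leq \mu$, and by $W$-invariance the whole sum groups into a $\Z_{\geq 0}$-combination of orbit sums $m_\tau$ with $\tau \leq \mu$. The coefficient of $m_\mu$ is exactly $1$: if $\sum_{i,j} v_{i,j} = \mu$, then summing the relations $\lambda_i - v_{i,j} \in \R_{\geq 0}\gen{\alpha_1, \ldots, \alpha_r}$ gives $0$, and linear independence of the simple roots forces $v_{i,j} = \lambda_i$ for every $(i,j)$, so a single tuple contributes.

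Given the triangular expansion, the rest is essentially formal. For injectivity: if $f = \sum_\mu b_\mu \chi^\mu$ lies in $\ker \Psi_\circ$, take $\mu_0$ maximal (in the partial order) among indices with $b_{\mu_0} \neq 0$; then only $\Psi_\circ(\chi^{\mu_0})$ can contribute to the $m_{\mu_0}$-coefficient of $\Psi_\circ(f)$, forcing $b_{\mu_0} = 0$ and a contradiction. For surjectivity: \cref{groups:ordering:convex} identifies $\{\nu \in D \cap \Lambda_\circ : \nu \leq \mu\}$ with $\conv(W\mu) \cap D \cap \Lambda_\circ$, which is finite because $\conv(W\mu)$ is compact and $\Lambda_\circ$ is a lattice. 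Induction on the cardinality of this finite set, with base case $\Psi_\circ(\chi^\mu) = m_\mu$ at minimal $\mu$, shows each $m_\mu$ lies in the image via $m_\mu = \Psi_\circ(\chi^\mu) - \sum_{\nu < \mu} c^\mu_\nu\, m_\nu$.

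The main obstacle is pinning down the leading-coefficient-$1$ statement in the triangular expansion. This rests on just two ingredients: linear independence of the simple roots, and the inequality $wv \leq v$ for $v \in D$ from \cref{groups:ordering:orbit}. Once the triangular expansion is in hand, the remaining arguments are bookkeeping on the well-founded poset $(D \cap \Lambda_\circ, \leq)$.
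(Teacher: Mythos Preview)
Your proposal is correct and follows essentially the same route as the paper: the triangular expansion you describe is precisely the content of \cref{psisupport}, your injectivity argument is the paper's \cref{linindep}, and your surjectivity induction on $\#\{\nu \leq \mu\}$ (with finiteness via \cref{groups:ordering:convex}) is exactly the paper's induction on $n(u)$. The only cosmetic difference is that you spell out the linear-independence-of-simple-roots step in the coefficient-$1$ argument a bit more explicitly than the paper does.
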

 \cref{bourbaki} implies $\Psi$ is also an isomorphism.
Since our proof of \cref{main} already requires analysis of $\Psi$, we provide a proof of \cref{bourbaki} in \cref{proofs:main}.
\subsection{}\label{Msupport} The elements
\[
  \underline\chi^u = \sum_{v \in Wu} \chi^v, \quad u \in D \cap \Lambda
\]
are a $\Z$-basis of $\Z[\Lambda]^W$, and the \word{support} of $f  = \sum_{u \in D \cap \Lambda} a_u \underline\chi^u$ is
\[
  \supp(f) := \{ u \in D \cap \Lambda : a_u \neq 0 \}.
\]
We study the supports of elements $\Psi(\chi^u)$.

\begin{lemma}\label{psisupport}
  Let $u \in D \cap \Lambda_\circ$.
  \begin{enumerate}
  \item If $v \in \supp \Psi_\circ(\chi^u)$, then $u - v \in \Z_{\geq 0}\gen{\alpha_1, \ldots, \alpha_r}$. In particular, $u \geq v$.
  \item \label{psisupport:1} $\underline \chi^u$ has coefficient 1 in $\Psi_\circ(\chi^u)$.
  \end{enumerate}
\end{lemma}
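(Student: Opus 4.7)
The plan is to expand $\Psi_\circ(\chi^u)$ as a product via the multiplicativity of $\Psi_\circ$ and then read both statements directly off the expansion.

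Since $u \in D \cap \Lambda_\circ$, write $u = \sum_{i=1}^r c_i \lambda_i$ with $c_i \in \Z_{\geq 0}$. Because $\Psi_\circ$ is a ring homomorphism,
\[
\Psi_\circ(\chi^u) \;=\; \prod_{i=1}^r \Bigl(\sum_{v \in W\lambda_i} \chi^v\Bigr)^{c_i}.
\]
Every monomial in the expansion has exponent of the form $\mu = \sum_{i=1}^{r}\sum_{j=1}^{c_i} w_{i,j}\lambda_i$ for some $w_{i,j} \in W$, and every coefficient in the expansion is a positive integer, so no cancellations can occur.

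For (i), I would invoke \cref{groups:ordering:orbit} with $\lambda_i \in D$ to conclude that each $\lambda_i - w_{i,j}\lambda_i$ lies in $\Z_{\geq 0}\gen{\alpha_1, \ldots, \alpha_r}$. Summing over $i,j$ then yields $u - \mu \in \Z_{\geq 0}\gen{\alpha_1, \ldots, \alpha_r}$, and any $v \in \supp \Psi_\circ(\chi^u)$ equals some such $\mu$.

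For (ii), the coefficient of $\underline{\chi}^u$ in $\Psi_\circ(\chi^u)$ equals the coefficient of $\chi^u$, because distinct dominant weights lie in disjoint $W$-orbits. To produce $\mu = u$ in the expansion one needs $\sum_{i,j}(\lambda_i - w_{i,j}\lambda_i) = 0$; because the simple roots are linearly independent and each summand is a non-negative integer combination of them, every $w_{i,j}\lambda_i$ must equal $\lambda_i$. This singles out exactly one term in the expansion, contributing coefficient $1$.

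I do not anticipate a real obstacle: once the product expansion is in hand, both parts are immediate from \cref{groups:ordering:orbit} together with the linear independence of the simple roots. The only subtlety worth flagging is that positivity of all coefficients in the expansion rules out cancellation when passing between the $\chi^v$- and $\underline{\chi}^v$-bases, so the support and the coefficient on $\underline{\chi}^u$ can be read off from the raw product expansion without further bookkeeping.
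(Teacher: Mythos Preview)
Your proof is correct and follows essentially the same approach as the paper: expand $\Psi_\circ(\chi^u)$ as the product $\prod_i(\sum_{v\in W\lambda_i}\chi^v)^{c_i}$, then use \cref{groups:ordering:orbit} termwise for part (i) and identify the unique term contributing to $\chi^u$ for part (ii). Your explicit remarks about positivity of coefficients (ruling out cancellation between the $\chi^v$- and $\underline\chi^v$-bases) and the linear independence of simple roots make transparent two points the paper leaves implicit.
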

\begin{proof}
  Write $u = \sum_{i=1}^r b_i \lambda_i$ with $b_1, \ldots, b_r \in \Z_{\geq 0}$.
  We may expand
  \[
    \Psi(\chi^u) = \Psi_\circ(\chi^u) =  \prod_{i=1}^r \left(\sum_{v \in W\lambda_i} \chi^v\right)^{b_i}
    =  \sum_{v \in D \cap \Lambda_\circ} a_v \underline \chi^v,
  \]
  where
  \[
    a_v = \#\left\{ (v_{ij}) \in \prod_{\substack{i=1, \ldots, r\\j = 1, \ldots, b_i}} W \lambda_i : \sum_{ij} v_{ij} = v \right\}.
  \]
  If $v \in \supp \Psi(\chi^u)$, then
  \[
    v = \sum_{\substack{i=1, \ldots, r\\j = 1, \ldots, b_i}} v_{ij}
  \]
  for some $(v_{ij}) \in \prod_{\substack{i=1, \ldots, r\\j = 1, \ldots, b_i}} W \lambda_i$.
  By \cref{groups:ordering:orbit}, $v_{ij} \leq \lambda_i$ for all $i$, so
  \[
        v = \sum_{\substack{i=1, \ldots, r\\j = 1, \ldots, b_i}} v_{ij} \leq \sum_{\substack{i=1, \ldots, r\\j = 1, \ldots, b_i}} \lambda_i = u.
  \]
  This inequality is an equality if and only if $v_{ij} = \lambda_i$ for all $i,j$, so $a_u = 1$.
  Finally, $u - v \in \Z \gen{\alpha_1, \ldots, \alpha_r}$ because $\lambda_i - v_{ij} \in \Z\gen{\alpha_1, \ldots, \alpha_r}$ for all $i,j$.
\end{proof}
\begin{lemma}\label{restriction}
  If $S \subset M$ is a saturated affine semigroup preserved by the action of $W$, and $u \in D \cap S$, then 
  $\supp \Psi(\chi^u) \subset S$.
\end{lemma}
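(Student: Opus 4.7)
The plan is to show that each $v \in \supp\Psi(\chi^u)$ lies in $S$ by combining three ingredients already in place: that $v$ is a convex combination of elements of $Wu$, that $S$ is $W$-invariant, and that $S$ is saturated. My strategy is to separately verify $v \in M$ and $v \in \cone(S)$, then invoke saturation to conclude $v \in S$.

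First I will verify the standard identification $S = \cone(S) \cap M$. The containment $\subseteq$ is automatic. Conversely, any $m \in \cone(S) \cap M$ is a non-negative rational combination of the generators of $S$ (since $\cone(S)$ is a rational polyhedron, so the fiber of the linear map $\Z_{\geq 0}^k \to M$ sending generators to themselves, extended to real scalars, is a non-empty rational polyhedron and thus has rational points), so $Nm \in S$ for some $N \in \Z_{\geq 1}$, and the saturation hypothesis gives $m \in S$.

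For the membership $v \in M$: decompose $u = z + u_\circ$ with $z \in Z$ and $u_\circ \in \Z_{\geq 0}\gen{\lambda_1, \ldots, \lambda_r}$, so that $\Psi(\chi^u) = \chi^z \cdot \Psi_\circ(\chi^{u_\circ})$ and $v = z + v_\circ$ for some $v_\circ \in \supp\Psi_\circ(\chi^{u_\circ})$. \cref{psisupport} gives $u - v = u_\circ - v_\circ \in \Z_{\geq 0}\gen{\alpha_1, \ldots, \alpha_r} \subset M$, so $v = u - (u-v) \in M$ because $u \in S \subset M$. For the membership $v \in \cone(S)$: \cref{psisupport} also gives $u \geq v$, with $u, v \in D$, so \cref{groups:ordering:convex} places $v \in \conv(Wu)$; since $S$ is $W$-stable and $u \in S$, we have $Wu \subset S \subset \cone(S)$, and convexity of $\cone(S)$ then puts $v$ there. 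Combining the two, $v \in \cone(S) \cap M = S$.

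The argument has no serious obstacle: saturation is precisely what converts membership in $\cone(S) \cap M$ into membership in $S$, and the rest is a direct combination of \cref{psisupport} and \cref{groups:ordering:convex}, together with the observation that $\Psi$ is $\Z[Z]$-linear so the $Z$-component of $u$ is inert.
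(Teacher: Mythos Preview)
Your proof is correct and follows essentially the same route as the paper's: decompose $u = z + u_\circ$, use \cref{psisupport} to get $u - v \in \Z_{\geq 0}\gen{\alpha_1,\ldots,\alpha_r} \subset M$, use \cref{groups:ordering:convex} to place $v$ in $\conv(Wu) \subset \cone(S)$, and conclude via $S = \cone(S) \cap M$. The only cosmetic differences are that the paper cites \cite[Theorem~1.3.5]{CLS11} for the identity $S = \cone(S) \cap M$ rather than arguing it directly, and applies \cref{groups:ordering:convex} to $u_\circ, v_\circ$ before translating by $z$ instead of applying it directly to $u, v$ as you do.
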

\begin{proof}
  Let $C := \R_{\geq 0} \gen{s : s \in S} \subset V$.
  Plainly, $C$ is convex and preserved by $W$. Moreover, $S = C \cap M$ by  \cite[Theorem 1.3.5]{CLS11}.

  Let $u \in D \cap S$ and $v \in \supp \Psi(\chi^u)$.
  We may write uniquely $u = z + u_\circ$ and $v = z + v_\circ$, with $v_\circ \in \supp \Psi_\circ(\chi^{u_\circ})$.

  We first show that $v \in M$.
  By \cref{psisupport}, $u - v = u_\circ - v_\circ \in \Z_{\geq 0}\gen{\alpha_1, \ldots, \alpha_r}$.
  Since $u, \alpha_1, \ldots, \alpha_r \in M$, $v$ must also lie in $M$.

  Next, we show that $v \in C$.
  By \cref{psisupport}, $u_\circ \geq v_\circ$.
  By \cref{groups:ordering:convex}, this means $v_\circ$ is in the convex hull of $W u_\circ$.
  Consequently,
  \[
    C \supset \conv(Wu) = z + \conv(Wu_\circ) \ni z + v_\circ = v.
  \]
  We conclude that $v \in C \cap M = S$.
\end{proof}
\subsection{}\label{proofs:main}
We now move to establish \cref{main,main:affine}, and \cref{main:projective}.
\begin{lemma}\label{linindep}
  Let $R$ be any commutative ring.
  If $U \subset D \cap \Lambda_\circ$ is a finite set of weights, then $\{\Psi_\circ(\chi^u) : u \in U\} \subset R \otimes_\Z \Z[\Lambda_\circ]$ is linearly independent over $R$.
\end{lemma}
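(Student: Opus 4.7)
The plan is to exploit the triangular structure provided by \cref{psisupport}: each $\Psi_\circ(\chi^u)$ has $\underline\chi^u$ with coefficient $1$ and all other terms $\underline\chi^v$ satisfying $v < u$ in the partial order of \cref{groups:ordering}. This means the family $\{\Psi_\circ(\chi^u) : u \in U\}$ should be ``unitriangular'' with respect to the basis $\{\underline\chi^v\}_{v \in D \cap \Lambda_\circ}$, and unitriangular families are linearly independent over any coefficient ring.

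First, I would observe that the elements $\underline\chi^v = \sum_{w \in Wv} \chi^w$, as $v$ ranges over $D \cap \Lambda_\circ$, are $R$-linearly independent in $R \otimes_\Z \Z[\Lambda_\circ]$: each $\underline\chi^v$ is a sum of distinct monomials $\chi^w$, the $W$-orbits in $\Lambda_\circ$ are pairwise disjoint, and the monomials $\chi^w$ form an $R$-basis of $R \otimes_\Z \Z[\Lambda_\circ]$. Hence $R$-linear combinations of the $\underline\chi^v$ vanish iff the coefficients do.

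Next, suppose $\sum_{u \in U} c_u \Psi_\circ(\chi^u) = 0$ with $c_u \in R$, and induct on $|U|$. Choose $u_0 \in U$ that is maximal for the partial order restricted to $U$. By \cref{psisupport}(i), every $u \in U$ with $u_0 \in \supp \Psi_\circ(\chi^u)$ satisfies $u_0 \leq u$; since $u_0$ is maximal in $U$, this forces $u = u_0$. By \cref{psisupport}(ii), the coefficient of $\underline\chi^{u_0}$ in $\Psi_\circ(\chi^{u_0})$ is $1$, so extracting the coefficient of $\underline\chi^{u_0}$ from the vanishing sum gives $c_{u_0} = 0$. Applying the induction hypothesis to $U \setminus \{u_0\}$ concludes the argument.

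The main (and only) obstacle is really already handled by \cref{psisupport}: once one knows $\Psi_\circ(\chi^u) = \underline\chi^u + \text{(strictly smaller terms)}$, linear independence over an arbitrary ring $R$ is essentially automatic. The subtlety worth highlighting is that one must pick a maximal element of $U$ in the partial order (rather than, say, extending to a total order) and use that maximality to guarantee no higher-order $\Psi_\circ(\chi^u)$ contributes to the $\underline\chi^{u_0}$-coefficient, so that the elimination step works over any coefficient ring without needing to invert anything.
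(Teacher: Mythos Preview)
Your proposal is correct and follows essentially the same approach as the paper: induct on $|U|$, pick a maximal element $u_0$, and use \cref{psisupport} to see that $\underline\chi^{u_0}$ appears in $\Psi_\circ(\chi^{u_0})$ with coefficient $1$ but in no other $\Psi_\circ(\chi^u)$ for $u \in U \setminus \{u_0\}$, forcing $c_{u_0}=0$. Your write-up is slightly more explicit than the paper's in spelling out why the $\underline\chi^v$ are $R$-linearly independent and in phrasing the induction as ``extract the coefficient of $\underline\chi^{u_0}$'', but the underlying argument is identical.
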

\begin{proof}
  We induct on the cardinality of $U$.
      If $\#U = 1$, the statement is true by \cref{psisupport}\cref{psisupport:1}.

  Otherwise, if $\#U > 1$, let $v \in U$ be a maximal element with respect to $\leq$.
  By \cref{psisupport}, there is no $v' \in U \setminus v$ such that $v \in \supp \Psi_\circ(\chi^{v'})$; however, $v \in \supp \Psi_\circ(\chi^v)$. Consequently, $\Psi_\circ(\chi^v)$ is not in the span of $\{\Psi_\circ(\chi^u) : u \in U \setminus v\}$. By the induction hypothesis, it follows that $\{\Psi_\circ(\chi^u) : u \in U\}$ is linearly independent.
\end{proof}

\begin{proof}[Proof of \cref{bourbaki}]
  We first show $\Psi_\circ$ is injective.
  If $f \in \ker(\Psi_\circ)$, then $f$ is contained in a subgroup of the form $\Z\gen{U} := \Z\gen{\chi^u : u \in U}$ with $U \subset D \cap \Lambda_\circ$ a finite set.
  \cref{linindep} implies that $\Psi_\circ$ is injective on $U$, so $f = 0$.

  Next, we show that $\Psi_\circ$ is surjective.
  To do this, we will induct to show that $\underline\chi^u$ is in the image of $\Psi_\circ$ for each $u \in D \cap \Lambda_\circ$.
  Given $u \in D \cap \Lambda_\circ$, let
  \[ n(u) := \#\{v \in D \cap \Lambda_\circ : v \leq u \}. \]
  We have $n(u) = 1$ if and only if $u$ is a minimal element of $D \cap \Lambda_\circ$ with respect to $\leq$ (minimal elements exist because for any $u \in D \cap \Lambda_\circ$,  $\{v \in D \cap \Lambda_\circ : v \leq u\}$ is the intersection of the discrete set $\Lambda_\circ$ with a compact set by \cref{groups:ordering:convex}, hence finite).

  If $n(u) = 1$, then \cref{psisupport} implies $\Psi_\circ(\chi^u) = \sum_{v \in Wu} \chi^v = \underline \chi^u$.
  Otherwise, suppose $n(u) > 1$. By the induction hypothesis and \cref{psisupport},
  \[
    \Psi_\circ(\chi^u) - \underline\chi^u \in \Z\gen{\underline \chi^v : v < u} \subset \Z\gen{\underline \chi^v : n(v) < n(u)} \subset \Psi_\circ(\Z[D \cap \Lambda_\circ]),
  \]
  so $\underline \chi^u$ is in the image of $\Psi_\circ$.
  This completes the proof.
\end{proof}
\begin{proof}[Proof of \cref{main}]
  By \cref{bourbaki}, $\Psi_\circ$ is an isomorphism. Since $\Z[Z]$ is free over $\Z$, it follows that $\Psi$ is also an isomorphism. It now suffices to show that $\Psi(\Z[D \cap S]) = \Z[S]^W$.
  
  The containment $\Psi(\Z[D \cap S]) \subset \Z[S]^W$ holds by \cref{restriction}.
  For the reverse containment, suppose $f \in \Z[D \cap \Lambda]$ is not in $\Z[D \cap S]$.
  There there is a maximal $v \in D \cap \Lambda$ such that $v \not \in S$, but $\chi^v$ appears in $f$.
  By \cref{psisupport} and \cref{restriction}, if $\chi^{v'}$ appears in $f$ with $v' \neq v$, then $v \not \in \supp \Psi(\chi^{v'})$; therefore, $v \in \supp \Psi(f)$.
  It follows that $\Psi(f)$ is not contained in $\Z\gen{\chi^s : s \in S}$, hence is also not contained in
  \[
    \Z[S]^W = \Z\gen{\chi^s : s \in S} \cap \Z[\Lambda]^W.
  \]
  We conclude that $\Psi(f) \in \Z[S]^W$ if and only if $f \in \Z[D \cap S]$.
\end{proof}
\begin{proof}[Proof of \cref{main:affine}]
  In light of \cite[Theorem 1.3.5]{CLS11}, we need only check that the linear functionals $f_\alpha = \gen{\alpha, -}$, $\alpha \in \Delta$, defining $D$ can be rescaled to lie in $\Hom(M, \Z)$.
  This is possible because $M \subset \Lambda$, so $\frac{2 f_\alpha}{\ip{\alpha,\alpha}}$ takes integer values on $M$.
\end{proof}
\begin{proof}[Proof of \cref{main:projective}]
  Recall from \cite[Chapter 2]{CLS11} that an $M$-rational polytope $P \subset V$,  gives rise to the projective toric variety $X_P := \Proj \Z[S_P]$, where
  \[
    S_P := \{ (t, p) : t \in \Z_{\geq 0}, p \in tP \} \subset \Z \times M,
  \]
  graded by $\deg (t,p) = t$.
  Since $W$ preserves $M$ and $P$, it acts on $S_P$, hence on $\Z[S_P]$, yielding an action on $X_P$ by toric automorhpisms.
  The fundamental domain of the $W$-action on $\R \times V$ is $\R \times D$, so by \cref{main}, there is  an isomorphism of graded rings
  \[ \Z[S_{D \cap P}] = \Z[(\R \times D) \cap S_P] \to \Z[S_P]^W. \]
  Applying $\Proj$, we obtain an isomorphism $X_P / W \cong X_{D \cap P}$.
\end{proof}

\begin{example}\label{quotient-example}
  Suppose that $W = S_2$ acts on $M = \Z^2$ by coordinate 
  permutation, and $S = \left(\Z_{\geq 0}\right)^2 \subseteq M$. Then 
  $W$ is generated by reflection across the single simple 
  root $\alpha = (-1,1)$, with corresponding weight group $\Lambda = Z + 
  \Z\cdot \lambda$, where $Z$ is the real span of $z = \frac{e_1 + 
  e_2}{2}$ and $\lambda = \frac{-e_1 + e_2}{2}$. The map $\Psi:\Z[D \cap 
  S] \to \Z[S]^W$ is given by 
  \[
    \Psi\left(\chi^{az + b\lambda}\right) = 
          (\chi^{z})^a\left(\chi^{\lambda} + 
            \chi^{s_{\alpha}(\lambda)}\right)^b,\quad
          az + b\lambda \in D \cap S.
  \]
  For instance, if $u = e_1 + 3e_2 = 4z + 2\lambda$, then $\Psi(\chi^u) = 
  \chi^{4z} \left(\chi^{\lambda} + \chi^{s_{\alpha}(\lambda)}\right)^2 
  = \chi^{4z + 2\lambda} + 2 \chi^{4z} + \chi^{4z - 2\lambda}$. See \cref{one-reflection-picture}. 
  Geometrically, we have that $\A^2/{S_2} \cong \A^2$.
  
   \begin{figure}[!htb]
  \begin{center}
   \begin{tikzpicture}
\draw[<->]   (0,6) |- (3,3);
\draw[->, very thick] (0,3) -- (-1,4) node[above left]{$\alpha$};
\draw[->, very thick] (0,3) -- (-0.5,3.5) node[above right]{$\lambda$};
\draw[->, very thick] (0,3) -- (0.5,2.5) node[right]{$s_{\alpha}(\lambda)$};
\draw[->, very thick] (0,3) -- (0.5,3.5) node[right]{$z$};
\foreach \x [count=\xi] in {-3,...,3}
\foreach \y [count=\yi from 0] in {-3,...,3}
\fill   (\x,\yi) circle[radius=2pt];

\fill[semitransparent, lightgray] (0,3) -- (0,6) -- (3,6);

\fill[color=black] (1,6) circle[radius=4pt];
\node[below right] at (1,6) {$\times 1$};

\fill[color=black] (2,5) circle[radius=4pt];
\node[below right] at (2,5) {$\times 2$};

\fill[color=black] (3,4) circle[radius=4pt];
\node[below right] at (3,4) {$\times 1$};

\end{tikzpicture} 
\caption{\label{one-reflection-picture} The lattice points in the shaded 
  cone are $D \cap S$, and the bold lattice points marked with coefficients represent 
$\Psi(\chi^{u})$ for $u = e_1 + 3e_2$.}
 \end{center} 
\end{figure}

\end{example}
\begin{remark}[Sublattices of finite index]
    A finite-index inclusion $M \hookrightarrow M'$ of lattices induces torus-equivariant maps $\pi: X_{P, M'} \to  X_{P, M}$ and $\pi': X_{P \cap D, M'} \to X_{P\cap D, M}$ \cite[Proposition 3.3.7]{CLS11}.
    The dense tori of $X_{P, M'}$ and $X_{P\cap D, M'}$ are canonically identified on account of having the same character lattice.
    Under this identification, the restrictions of $\pi$ and $\pi'$ to the dense tori of $X_{P, M'}$ and $X_{P \cap D,M'}$ are equal; both $\pi$ and $\pi'$ are quotients by their common kernel $G$.

    If $M$, $M'$, and $P$ are all preserved by $W$, then $G$ is fixed pointwise by $W$ because $M$ contains the root lattice.
  Consequently, there is a commuting square of quotient maps
  \[\begin{tikzcd}
      X_{P,M'} \ar[r] \ar[d, "\pi"] & X_{P,M'} / W \cong X_{P \cap D, M'} \ar[d, "\pi'"]  \\
      X_{P, M} \ar[r] & X_{P,M}/W \cong X_{P \cap D, M}.
  \end{tikzcd}\]
\end{remark}

\section{Quotients of real points}\label{sec:realpts}
We review topological models for $X_P^\R$ and $X_P^\R / W$ in \cref{sec:realptsmodel} and \cref{sec:quotientmodel}, respectively. In  \cref{sec:realpts:surface}, we prove \cref{quotients of real surfaces}, which describes the homotopy type of a quotient of a real toric surface.

Except where otherwise noted, we follow the notations of \cref{proofs}, with the following additions:
\begin{itemize}
  \item the simple roots $\Delta$ are assumed to span $V$, so $r = \#\Delta = \dim V = \dim P$,
  \item $\Delta^\vee$ is the set of simple coroots $\alpha^\vee = \frac{2\alpha}{\ip{\alpha,\alpha}}$ for $\alpha \in \Delta$,
  \item $\Lambda^\vee$ is the coweight lattice, with basis $\lambda_1^\vee, \ldots, \lambda_r^\vee$ dual to the simple roots,
  \item $N = \Hom(M, \Z)$ is the dual of $M$,
  \item $\mathcal S = \Hom(M, \{\pm 1\}) \cong N / 2N$ is the \word{sign subgroup} of the dense torus of $X_P$, and
  \item $Q^\circ$ is the relative interior of a polyhedron $Q$.
\end{itemize}
Despite the fact that $\mathcal S$ is a subgroup of the dense torus of $X_P$, we will write it additively via the isomorphism $\mathcal S \cong N/2N$.
The exposition of \cref{sec:realptsmodel} loosely follows \cite{soprunovaLowerBoundsReal2013}; readers are referred to \cite{CLS11} for any undefined terms pertaining to toric or polyhedral geometry.

\subsection{}\label{sec:realptsmodel}
Let $X_P^\R$ denote the real points of $X_P$.
The sign group $\mathcal S$ is contained in the real points of the dense torus in $X_P$, hence acts on $X_P^\R$.
If $p \in X_P^\R$ belongs to the torus orbit $O_F$ corresponding to a face $F$ of $P$, then the stabilizer in $\mathcal S$ of $p$ is the subgroup $\mathcal S_p \subset \mathcal S$ generated by lattice points in the inner normal cone to $F$.

Each $\mathcal S$-orbit of $X_P^\R$ intersects the non-negative part $X_P^\geq$ of $X_P$ in a unique point. The toric moment map $W$-equivariantly identifies $X_P^\geq$ with $P$, sending the non-negative part of $O_F$ homeomorphically onto $F^\circ$. Hence, we obtain an oft-rederived topological model for $X_P^\R$ (see, e.g. \cite[\S2.4]{jurkiewicz1985torus}, \cite[\S2]{soprunovaLowerBoundsReal2013}, \cite[\S2]{G24}).

\begin{theorem}\label{realptsmodel}
  The space $X_P^\R$ is homeomorphic to
  \[
\mathcal S \times P / \sim,
  \]
  where $\epsilon \times p \sim \delta \times q$ if and only if $p=q$ and $\epsilon-\delta \in \mathcal S_p$.
\end{theorem}
\begin{corollary}
 The space $X_P^\R$ is a connected CW complex.
\end{corollary}
\begin{proof}
  The topological description  shows that $X_P^\R$ is obtained by gluing copies of $P$ along faces; the faces give a cell decomposition.
  For connectedness, observe that if $p$ is a vertex of $P$, then for all $\epsilon, \epsilon' \in \mathcal S$,  $\epsilon \times p \sim \epsilon' \times p$.
\end{proof}

\subsection{}\label{sec:quotientmodel}
Restricting \cite[Lemma 4.4]{G24} to real points, we also obtain a topological description for $X_P^\R/W$. Since the setup for \cite[Lemma 4.4]{G24} differs slightly from the present one, we give a proof.
\begin{theorem}\label{quotientmodel}
  The quotient space $X_P^\R / W$ is a connected CW complex homeomorphic to
  \[
\mathcal S  \times (P \cap D) / \sim_W,
  \]
  where $\epsilon \times p \sim_W \delta \times q$ if and only if $p = q$ and there is $w \in W_p := \{ w \in W : w(p) = p\}$ such that $w(\epsilon) - \delta \in \mathcal S_p$.
\end{theorem}
\begin{proof}
  The action of $W$ preserves both $X_P^\geq$ and $\mathcal S$, and satisfies $w(\epsilon p) = w(\epsilon) w(p)$ for all $w \in W$, $\epsilon \in \mathcal S$, and $p \in X_P^\geq$.
  Consequently, every fiber of the composition
  \[
\mathcal S \times P \twoheadrightarrow X_P^\R \twoheadrightarrow X_P^\R / W
  \]
  contains a point of $\mathcal S \times (P \cap D)$, and we obtain a surjection
  \[
    \mathcal S \times (P \cap D) \twoheadrightarrow X_P^\R / W.
  \]
  Two points $\epsilon \times p$ and $\delta \times q$ have the same image if and only if there is $w \in W$ such that
  \begin{equation*}
    q = (-\delta)(w(\epsilon p)) = (w(\epsilon) - \delta) w(p).
  \end{equation*}
  The conditions $p=q$, $w(p) = p$, and $w(\epsilon) - \delta \in \mathcal S_p$ plainly suffice to make this equation hold.
  Conversely, suppose the equation holds.
  Since $w(p)$ and $q$ are points of $X_P^\geq$ in the same $\mathcal S$-orbit, $w(p) = q$.
  In turn, $p$ and $q$ are points of $P \cap D$ in the same $W$-orbit, so $p = q$.
  It follows that $w(p) = p$ and $w(\epsilon) - \delta \in \mathcal S_p$.
\end{proof}

\subsection{}\label{sec:realpts:surface}
In the remainder of this section, we prove \cref{quotients of real surfaces}, our result on real toric surfaces.
We prepare for the proof with a few lemmas.
\begin{lemma}\label{r-1lem}
  If for all but at most one $\epsilon \in \mathcal S$ there is $\alpha \in \Delta$ such that $s_\alpha(\epsilon) = \epsilon$, then $X_P^\R / W$ is homotopy equivalent to a CW complex with cells of dimension at most $r-1$.
\end{lemma}
\begin{proof}
  By \cref{quotientmodel}, $X_P^\R / W$ is constructed by gluing together $2^r$ copies of $P \cap D$ along faces.
  If $\epsilon$ is stabilized by a simple reflection $s_\alpha$, then no point of $\epsilon \times (\alpha^\perp \cap P \cap D)^\circ$ is glued. Therefore, $\epsilon \times (P \cap D)$ can be retracted onto a union of faces in its boundary.

  After performing such retractions for all but one copy of $P \cap D$, then contracting the final copy of $P \cap D$ to a point, we are left with a CW complex homotopy equivalent to $X_P^\R / W$ that has at most $(r-1)$-dimensional cells.
\end{proof}
\begin{lemma}\label{atmost1:rootlattice}
  If $M$ is the root lattice, then all but at most one element of $\mathcal S$ is fixed by some simple reflection.
\end{lemma}
\begin{proof}
  In this case, $N$ is the coweight lattice $\Lambda^\vee$. Using the isomorphism $\mathcal S \cong N /2N =  \Lambda^\vee / 2 \Lambda^\vee$, express $\epsilon = c_1 \lambda_1^\vee + \cdots + c_r \lambda_r^\vee + 2 \Lambda^\vee$ as a linear combination of fundamental coweights. If $c_i = 0$, then $s_{\alpha_i}$ fixes $\epsilon$, so the unique element that might be fixed by no simple reflection is $\sum_i \lambda_i^\vee + 2\Lambda^\vee$.
\end{proof}
\begin{lemma}\label{atmost1:other}
  If $W$ is irreducible of type $A_{2n}$, $C_n$, $E_6$, $E_7$, $E_8$, $F_4$, or $G_2$, then all but at most one element of $\mathcal S$ is fixed by some simple reflection.
\end{lemma}
\begin{proof}
  Recall that we have chosen simple roots so that $\Delta \subset M \subset \Lambda$, equivalently, so that $\Delta^\vee \subset N \subset \Lambda^\vee$.
  Suppose that $u+2N \in \mathcal S \cong N/2N$ is fixed by no simple reflection. Then $u = \sum_{i=1}^r c_i \lambda_i^\vee$ with all coefficients $c_i$ odd (otherwise, if $c_i$ were even, then $u - s_{\alpha_i}(u) = c_i\alpha_i^\vee \in 2N$, contrary to the assumption that $u+2N$ is fixed by no simple reflection). Hence, $u$ is a preimage for $\epsilon_0 := \sum_{i=1}^r \lambda_i^\vee + 2 \Lambda^\vee$ under the map $N/2N \to \Lambda^\vee / 2\Lambda^\vee$. It now suffices to show in each case that $\epsilon_0$ has at most one preimage.

  In the cases $A_{2n}$, $E_6$, $E_8$, $F_4$, or $G_2$, $N$ must have odd index in $\Lambda^\vee$, so the map $\mathcal S \cong N / 2N \to \Lambda^\vee / 2\Lambda^\vee$ is an isomorphism.

  In types $C_n$ and $E_7$, the coroot lattice has index 2 in the coweight lattice. Accounting for \cref{atmost1:rootlattice}, we need only consider when $N$ is the coroot lattice.
  In this case, $N / 2N \to \Lambda^\vee / 2\Lambda^\vee$ is given by the transpose of the Cartan matrix mod 2, and it is straightforward to verify that $\epsilon_0$ is not in the image (for pictures of the Cartan matrices, see \cite[pp. 547 \& 553]{carter}).
\end{proof}
\begin{proof}[Proof of \cref{quotients of real surfaces}]
  By Lemmas \ref{r-1lem}, \ref{atmost1:rootlattice}, and \ref{atmost1:other}, $X_P^\R / W$ is homotopy equivalent to a wedge of circles.
  We discover how many by computing the Euler characteristic.

  Let $k$ be the number of vertices of $P$ that lie in $D^\circ$.
  The polyhedron $P \cap D$ has $k+3$ vertices: $k$ vertices in $D^\circ$, the origin, and two vertices on the walls of $D$, which we term \word{corners}.
  The following facts about cell counts hold:
  \begin{itemize}
    \item $X_P^\R / W$ has 4 two-cells, indexed by elements of $\mathcal S$.
    \item Each of the $k+1$ edges of $\epsilon \times (P \cap D)$ in $D^\circ$ is glued to exactly one copy of the same edge, giving $2(k+1)$ one-cells. The edges $0 \times (\alpha_i^\perp \cap P \cap D)$ are unglued, giving an additional 2 one-cells. This yields a partial count of $2(k+1)+2$ one-cells, with up to 6 additional one-cells, whose count we shall determine on a case-by-case basis.
    \item Each of the $k$ vertices of $\epsilon \times (P \cap D)$ in $D^\circ$ is glued to all other copies of the same vertex, giving $k$ zero-cells. The vertex $0 \times 0$ is unglued, and the corners of $0 \times (P \cap D)$ are each glued to exactly one other copy of themselves. All corners are glued to at least one other copy of themselves. This yields a partial count of $k+3$ zero-cells, with up to 6 additional zero-cells, whose count we shall determine on a case-by-case basis.
  \end{itemize}

  We now determine the counts of ``additional'' cells case-by-case.
  There are three possible positions for $P$ relative to $D$, depicted in \cref{three shapes}; assume that $P$ is of Shape I. In this case, $k$ is equal to the number $\ell$ of vertices of $P$ that lie in $D$.
  \smallskip

    In types $A_2$ and $G_2$, $N$ has odd index in $\Lambda^\vee$, so the map $\mathcal S \cong N/2N \to \Lambda^\vee / 2\Lambda^\vee$ is an isomorphism.
  Hence,  we may assume without loss of generality that $M$ is the root lattice and $N = \Lambda^\vee$.
  From the Cartan matrices of $A_2$ and $G_2$ taken mod 2, we see that
  \begin{itemize}
    \item The edge $(\lambda_i^\vee +2N) \times (\alpha_i^\perp \cap P \cap D)$ is glued to $(\lambda_1^\vee+\lambda_2^\vee +2N) \times (\alpha_i^\perp \cap P \cap D)$ for $i = 1,2$, and no further edge gluings occur. There is a total of $2(k+1)+2+4 = 2k+8$ one-cells.
    \item The vertices $\epsilon \times 0$ for $\epsilon \neq 0$ are all glued together, and the four remaining vertices are glued in two pairs. There is a total of $k+3+1 + 2 = k+6$ zero-cells. 
  \end{itemize}
  These counts give an Euler characteristic of $2-k$.

  In type $C_2$, the root lattice has index 2 in the weight lattice, so $M$ is either the root lattice or the weight lattice.
  When $M$ is the root lattice,
  $(\lambda_2^\vee+2N) \times (\alpha_2^\perp \cap P \cap D)$ is glued to $(\lambda_1^\vee + \lambda_2^\vee+2N) \times (\alpha_2^\perp \cap P \cap D)$. This edge gluing forces two vertex gluings. In total, we have 4 two-cells, $2(k+1)+2+5$ one-cells, and $k+3+4$ zero-cells, for an Euler characteristic of $2-k$.

  Finally, suppose $M$ is the weight lattice of type $C_2$ (so $N$ is the coroot lattice).
  The action of $W$ on $N$ fixes $\alpha_1^\vee$, and $s_{\alpha_1}(\alpha_2^\vee + 2N) = \alpha_2^\vee + \alpha_1^\vee + 2N$. This creates a single edge gluging; the final cell counts are the same as when $M$ is the root lattice, for an Euler characteristic of $2-k$.

  The proof is now complete when $P$ is of Shape I.
  For Shapes II and III, the counts of two-cells and one-cells are identical; for zero-cells, the only difference is at the corners of $P \cap D$.
  If a corner is a vertex of $P$, then all copies of it are identified to form a single zero-cell in $X_P^\R/W$ (rather than the 2 zero-cells it would form otherwise), decreasing Euler characteristic by 1. Correspondingly, the number $\ell$ of vertices of $P$ that lie in $D$ is increased by 1, and we obtain
    $X_P^\R / W \cong \bigvee_{\ell-1} S^1$, as desired.
\end{proof}

\begin{figure}[htpb]
  \centering
  \begin{tikzpicture}
    \begin{scope}[scale=0.7,shift={(-7,0)}]
      \newdimen\r
      \r=3cm
      \draw (75:\r)--(105:\r);
      \draw (15:\r)--(45:\r);
      \draw[dotted] (45:\r)--(75:\r);
      \fill (75:\r) circle (3pt);
      \fill (105:\r) circle (3pt);
      \fill (15:\r) circle (3pt);
      \fill (45:\r) circle (3pt);
      \draw[blue,->] (0:0) -- (30:{1.25*\r}) node[above] {$\lambda_1$};
      \draw[blue,->] (0:0) -- (90:{1.25*\r}) node[left] {$\lambda_2$};
      \node at (270:{0.5*\r}) {Shape I};
    \end{scope}
    \begin{scope}[scale=0.7]
      \newdimen\r
      \r=3cm
      \draw (75:\r)--(105:\r);
      \draw (15:\r)--(45:\r);
      \draw[dotted] (45:\r)--(75:\r);
      \draw (15:\r)--(0:{cos(15)*\r});
      \fill (75:\r) circle (3pt);
      \fill (105:\r) circle (3pt);
      \fill (15:\r) circle (3pt);
      \fill (45:\r) circle (3pt);

      \draw[blue,->] (0:0) -- (15:{1.25*\r}) node[above] {$\lambda_1$};
      \draw[blue,->] (0:0) -- (90:{1.25*\r}) node[left] {$\lambda_2$};
      \node at (270:{0.5*\r}) {Shape II};
    \end{scope}
    \begin{scope}[scale=0.7,shift={(7,0)}]
      \newdimen\r
      \r=3cm
      \draw (75:\r)--(90:{cos(15)*\r});
      \draw (15:\r)--(30:{cos(15)*\r});
      \draw[dotted] (30:{cos(15)*\r})--(60:{cos(15)*\r});
      \draw (60:{cos(15)*\r})--(75:\r);
      \draw (15:\r)--(0:{cos(15)*\r});
      \fill (75:\r) circle (3pt);
      \fill (15:\r) circle (3pt);

      \draw[blue,->] (0:0) -- (15:{1.25*\r}) node[above] {$\lambda_1$};
      \draw[blue,->] (0:0) -- (75:{1.25*\r}) node[left] {$\lambda_2$};
      \node at (270:{0.5*\r}) {Shape III};
    \end{scope}
  \end{tikzpicture}
  \caption{Three possible shapes of the quotient polygon $P \cap D$. The blue arrows indicate the fundamental weights $\lambda_1,\lambda_2$ that define the fundamental domain $D$.}\label{three shapes}
\end{figure}

\section{Quotients of real permutohedral varieties}\label{Higher dimensional cases}
We establish \cref{main-thm-real}. The major facts are laid out in \cref{realstrategy}; the remainder of the section is devoted to proving the key \cref{gluedspacelemma}.
Throughout, we follow the conventions of \cref{sec:realpts} with the additional assumptions:
\begin{itemize}
  \item $W$ is irreducible,
  \item $M$ is the root lattice of $W$, hence $N = \Lambda^\vee$ is the coweight lattice,
  \item $P$ is a \word{permutohedron}, meaning that it is the convex hull of the $W$-orbit of a point in $D^\circ$,
  \item $s_i := s_{\alpha_i}$ is the simple reflection corresponding to the simple root $\alpha_i$, and
  \item $[r]:=\{1,2,\ldots,r\}$.
\end{itemize}
Via the basis of fundamental coweights, we identify $\mathcal S \cong N / 2N \cong (\Z/2)^r$.
We will write $\lambda_j^\vee$ for both the fundamental coweight, and for its  image in $N/2N$, trusting that this will cause no confusion. For $u\in\mathcal{S}$, we also write $u_i$ for its coefficient of $\lambda_i^{\vee}$.
\subsection{}\label{realstrategy}
We explain our approach to \cref{main-thm-real}.
Order $\mathcal S \cong (\Z/2)^r$ lexicographically, i.e.\ $(u_1, \ldots, u_r) > (v_1, \ldots, v_r)$ if $u_i = 1$ and $v_i = 0$ for $i = \min\{j : u_j \neq v_j\}$.
Per \cref{quotientmodel}, $X_P^\R/W$ is constructed by gluing together copies of $P \cap D$ indexed by $\mathcal S$.
For each $u \in \mathcal S$, there is an inclusion $\pi_u: u \times (P \cap D) \hookrightarrow X_P^\R / W$ and a pushout diagram
\[\begin{tikzcd}
     \mathbf{G}_u :=\pi_u^{-1}\left(\underset{v>u}{\bigcup} \im(\pi_v)\right) \arrow[d,hookrightarrow,"f_u"']\arrow[r,"\pi_{u}"]\arrow[rd,phantom,"\ulcorner",very near end] & \underset{v > u}{\bigcup}\im(\pi_v)\arrow[d,hookrightarrow,"g_u"]\\
     u \times\left(P\cap D\right)\arrow[r,"\pi_{u}"] & \mathop{\bigcup}\limits_{v \geq u}\im(\pi_v),
   \end{tikzcd}\]
 (in which the ``$\mathbf{G}$'' in ``$\mathbf{G}_u$'' is short for ``Glued Subspace''.)
The linchpin of our proof is:
\begin{lemma}\label{gluedspacelemma}
  The space $\mathbf{G}_u$ is contractible for all $u < \sum_{i=1}^r \lambda_i^\vee$.
 \end{lemma}
 Assuming \cref{gluedspacelemma}, \cref{main-thm-real} is straightforward.
 \begin{proof}[Proof of \cref{main-thm-real}]
   All glued spaces $\mathbf{G}_u$ are contractible by \cref{gluedspacelemma}, so all the maps $f_u$ are homotopy equivalences.
   Consequently, the pushouts $g_u$ are also homotopy equivalences.
   By composing the $g_u$'s in decreasing order of $u$, we obtain a homotopy equivalence
 \begin{align*}
   (1,1,\ldots,1) \times (P \cap D) {\longrightarrow} \cdots \overset{g_u}{\longrightarrow} \bigcup_{v \geq u} \im(\pi_v) \longrightarrow  \cdots \overset{g_0}{\longrightarrow} \bigcup_{v \geq 0} \im(\pi_v) = X_P^\R / W,
 \end{align*}
 so $X_P^\R / W$ is contractible because $P \cap D$ is.
\end{proof}
The remainder of this section is devoted to proving \cref{gluedspacelemma}.

\subsection{}\label{sec:faces}
We first describe the face structure of $P \cap D$, following \cite[\S 3]{G24} (see also \cite{burrull2024stronglydominantweightpolytopes}).
The polytope $P \cap D$ is combinatorially equivalent to a cube $[0,1]^r \subset \R^r$.
For each pair of disjoint subsets $I, J \subset [r] := \{1, \ldots, r\}$, there is a face $H_IF_J$ of dimension $r- \#I-\#J$ with inner normal cone generated by
\[
  \{\alpha_{i} :  i \in I\} \cup \{-\lambda_j^\vee : j \in J\}.
\]
Two faces $H_IF_J$ and $H_{I'}F_{J'}$ are disjoint if and only if $(I \cap J') \cup (J \cap I') \neq \emptyset$. Otherwise, their intersection is $H_IF_J \cap H_{I'}F_{J'} = H_{I \cup I'}F_{J \cup J'}$. We will often write $H_I$ and $F_J$ for the faces $H_\emptyset F_J$ and $H_I F_\emptyset$, respectively.

If $U \subset P \cap D$ is a union of faces, then there is a unique decomposition
\[
  U = H_{I_1}F_{J_1} \cup H_{I_2}F_{J_2} \cup \cdots \cup H_{I_k}F_{J_k},
\]
such that no component $H_{I_j}F_{J_j}$ in the union is contained in any other.
The components of this union are the \word{irreducible components} of $U$.
To establish \cref{gluedspacelemma}, we will employ the following criterion for contractibility.

\begin{proposition}\label{not_opposite_component}
  Let $U \subset P \cap D$ be a union of faces.
  If there is an irreducible component of $U$ that intersects all other irreducible components of $U$, then $U$ is contractible.
\end{proposition}
\begin{proof}
  Identify $P \cap D \cong [0,1]^r$ so that each face $H_I F_J$ is identified with $\{0\}^I \times \{1\}^J \times [0,1]^{[r] \setminus (I \cup J)}$.
  The map
  \[
    [0,1] \times \R^I \times \R^J \times \R^{[r] \setminus (I \cup J)} \to \R^{I}\times \R^{J} \times \R^{[r] \setminus (I \cup J)}, \quad (t,x,y,z) \mapsto (tx, (1-t)(1,1,\ldots,1) + ty, z)
  \]
  gives a deformation retraction of $[0,1]^r$ onto $H_I F_J$, and on each face $H_{I'}F_{J'}$ that intersects $H_I F_J$, it restricts to a retraction of $H_{I'}F_{J'}$ onto $H_{I'}F_{J'} \cap H_I F_J$. Consequently, if $H_I F_J$ is an irreducible component of $U$ that intersects all others, then $U$ is homotopy equivalent to the contractible space $H_I F_J$.
\end{proof}
\subsection{}\label{words}
\newcommand{\SR}{\operatorname{Eff}}
\newcommand{\sr}{\SR}
We develop some combinatorial notions that will facilitate use of \cref{not_opposite_component}.
A \word{word} of length $\ell$ is $\mathbf w \in [r]^\ell$.
Let $\mathbf w + \mathbf w'$ denote the concatenation of two words $\mathbf w$ and $\mathbf w'$, and define the \word{support} of $\mathbf w$, denoted $\supp(\mathbf w)$, to be the subset of $[r]$ that appears in $\mathbf w$.
A \word{subword} of $\mathbf w$ is any word formed by deleting some entries of $\mathbf w$, and a \word{segment} of $\mathbf w$ is a subword of the form $\mathbf w' = (w_k, w_{k+1}, \ldots, w_{k'-1}, w_{k'})$ for some $1 \leq k \leq k'\leq \ell$.
Each word $\mathbf w = (w_1, \ldots, w_\ell)$ has an associated group element $s_{\mathbf w} = s_{w_1} s_{w_2} \cdots s_{w_\ell} \in W$, where $s_i$ is the simple reflection corresponding to $s_{\alpha_i}$.

\begin{definition}
  A word $\mathbf w$ is \word{efficient} on an element $u \in \mathcal S$ if there is no subword $\mathbf w'$ of $\mathbf w$ such that $s_{\mathbf w}(u) = s_{\mathbf w'}(u)$.
  Write $\SR(u)$ for the set of words efficient on $u \in \mathcal S$, and let $\SR = \cup_{u \in \mathcal S} \SR(u)$.
\end{definition}
We will use the following fact freely and without mention.
\begin{proposition}\label{segmenthereditary}
  If $\mathbf w \in \SR$, then all segments of $\mathbf w$ are in $\SR$.
\end{proposition}
Let $\Gamma$ be the directed graph with vertex set $[r]$, and an edge $i \to j$ whenever the Cartan integer $\ip{\alpha_i^\vee, \alpha_j}$ is odd. This graph is essentially the Dynkin diagram associated to $W$, and we will always label its vertices as in \cite[PLATES I-IX]{B02}.
\begin{proposition}\label{neighboraction}
  If $u = \sum_{j=1}^r u_j \lambda_j^\vee \in \mathcal S$, $\mathbf w \in \SR(u)$, and $i \in [r]$, then
  \[
    s_{\mathbf w}(u)_i = u_i + \sum_{\substack{j \to i\\ \text{is an edge of $\Gamma$}}} \#\{k \in [\ell] : w_k = j\} \lambda_j^\vee \in \Lambda^\vee / 2 \Lambda^\vee.
  \]
\end{proposition}
\begin{proof}
  We have
  \[
    s_i(\lambda_i^\vee)
    = \lambda_i^\vee - \ip{\alpha_i^\vee, \lambda_i^\vee} \alpha_i
    = \lambda_i^\vee - \ip{\alpha_i, \lambda_i^\vee}\alpha_i^\vee
    = \lambda_i^\vee - \sum_{j=1}^r \ip{\alpha_i^\vee, \alpha_j} \lambda_j^\vee.
  \]
  Consequently, for $u = \sum_{j=1}^r u_j \lambda_j^\vee \in \mathcal S = \Lambda^\vee / 2\Lambda^\vee$ with $u_i = 1$,
  \[
    s_i(u)_j = \begin{cases} u_j, & \ip{\alpha_i^\vee, \alpha_j} \equiv 0 \mod 2 \\
      u_j + 1, & \ip{\alpha_i^\vee, \alpha_j} \equiv 0 \mod 2.
    \end{cases}
  \]
  If $\mathbf w = (w_1, \ldots, w_\ell) \in \SR(u)$, then for each $1 \leq i \leq \ell$, $s_{w_i} \in \SR(s_{w_{i+1}}s_{w_{i+2}} \cdots s_{w_\ell}(u))$, so we must have $(s_{w_{i+1}}s_{w_{i+2}} \cdots s_{w_\ell}u)_{i} = 1$ (otherwise, we could delete $w_i$ to make a shorter word $\mathbf w'$ with $s_{\mathbf w'}(u) = s_{\mathbf w}(u)$).
  Consequently, we may sum the contributions of the individual simple reflection to obtain the desired formula.
\end{proof}
\begin{example}\label{forbidden}
  By \cref{segmenthereditary}, if a word $\mathbf w$ has a segment that is not efficient, then $\mathbf w$ is not efficient. Becuase of this, it will be useful to record some examples of words that are not efficient.
  \begin{itemize}
    \item The word $(i,i)$ is never efficient because $s_i s_i = 1$.
    \item If $i < j$ are neighbors in the Dynkin diagram of $W$, then $(i,j,i)$ is not efficient.
    If $W$ is of type $B_r$ and $j=r$, then $(i,j,i)$ is not efficient because $s_r$ acts trivially on $\mathcal S$; one argues similarly when $W$ is of type $F_4$ and $j=3$.
    In all other cases, the Cartan integer $\ip{\alpha_i^\vee, \alpha_j}$ is odd, so one of the two copies of $s_i$ in $s_i s_j s_i$ will act trivially unless $s_j$ acts trivially.
    \item If $W$ is of type $D_r$, then $(r-2, r-1, r)$ is not efficient. This is plainly true for any vector $u$ with $u_r$ or $u_{r-1}$ not equal to 1, and if $u_r = u_{r-1} = 1$, then the action of $s_{r-2} s_{r-1} s_r$ on $\mathcal S$ is the same as that of $s_{r-2}$.
  \end{itemize}
\end{example}
Say that $\mathbf w$ and $\mathbf w'$ differ by a \word{commutation} if $\mathbf w'$ can be obtained from $\mathbf w$ by replacing a segment $(i,j)$ of $\mathbf w$ such that $s_i s_j = s_j s_i$ with the segment $(j, i)$.
Write $\mathbf w \approx \mathbf w'$ if $\mathbf w'$ can be obtained from $\mathbf w$ by a sequence of commutations; in this case, $s_{\mathbf w} = s_{\mathbf w'}$.

If $\mathbf w$ and $\mathbf w'$ are words such that $\Gamma$ has no edges between $\supp(\mathbf w)$ and $\supp(\mathbf w')$, then $\mathbf w + \mathbf w' \approx \mathbf w' + \mathbf w$. Consequently, any word $\mathbf w$ can be written $\mathbf w \approx \mathbf w_1 + \mathbf w_2 + \cdots + \mathbf w_k$, where the supports of the $\mathbf w_i$'s are pairwise disjoint, and $\Gamma$ has no edges between $\supp(\mathbf w_i)$ and $\supp(\mathbf w_j)$ for $i \neq j$.
In this case, say $\mathbf w_1, \ldots, \mathbf w_k$ are \word{components} of $\mathbf w$.
We will use freely (and perhaps without mention) the following properties.
\begin{proposition}
  Let $u \in \mathcal S$.
  \begin{enumerate}
    \item If $\mathbf w \approx \mathbf w'$, then $\mathbf w \in \SR(u)$ if and only if $\mathbf w' \in \SR(u)$.
    \item $\mathbf w \in \SR(u)$ if and only if all components of $\mathbf w$ are in $\SR(u)$
  \end{enumerate}
\end{proposition}
The next two results will be crucial for the proof of \cref{gluedspacelemma}.
\begin{lemma}\label{onetime}
  Suppose $W$ is of classical type.
  If $\mathbf w \in \SR$, then $\min \left(\supp(\mathbf w)\right)$ appears exactly once in $\mathbf w$.
\end{lemma}
\begin{proof}
  We induct on the length of $\mathbf w$.
  The result plainly holds for words of length at most two.
  For length greater than two, suppose towards a contradiction that $i:=\min \left(\supp(\mathbf w)\right)$ appears at least twice in $\mathbf w$.
  Write
  \[
    \mathbf w \approx \mathbf w''' + (i) + \mathbf w'' + (i) + \mathbf w',
  \]
  with $i \not \in \supp(\mathbf w') \cup \supp(\mathbf w'')$.
  If $\supp(\mathbf w'')$ contains no neighbors of $i$, then up to commutation, $(i,i)$ is a segment of $\mathbf w$.
  If $\supp(\mathbf w'')$ contains one neighbor $j$ of $i$, then $j = \min\left( \supp(\mathbf w'')\right)$. By the induction hypothesis, $j$ appears exactly once in $\mathbf w''$, so up to commutation, $(i,j,i)$ is a segment of $\mathbf w$.
  Finally, if $\supp(\mathbf w'')$ contains two neighbors of $i$, then $W$ is of type $D_r$ and $i=r-2$.
  The two reflections $s_{r-1}$ and $s_r$ commute with all reflections other than $s_{r-2}$; therefore, up to commmutation, $(r-2, r-1, r)$ is a segment of $\mathbf w$.

  In each case, the segment obtained is not efficient per \cref{forbidden}, contradicting the fact that $\mathbf w$ is efficient.
\end{proof}
Given $u = \sum_{i=1}^r u_i \lambda_i^\vee \in \mathcal S$, let $\supp(u) := \{i : u_i \neq 0\}$ (we trust our repeated use of this notation in different contexts will cause no confusion).
\begin{lemma}\label{reductionlem}
  Suppose $W$ is of classical type.
  If $\mathbf w \in \sr(u)$ and $m := \min \left(\supp(\mathbf w)\right) \in \supp(u)$, then
    $\mathbf w \approx \mathbf w' + (m)$, with $\supp(\mathbf w') = \supp(\mathbf w) \setminus m$.
\end{lemma}
\begin{proof}
  By \cref{onetime},
    $\mathbf w \approx \mathbf w''' + (m) + \mathbf w''$,
  with $m \not \in \supp(\mathbf w''') \cup \supp(\mathbf w'')$.
  If there is a neighbor of $m$ in $\mathbf w''$, then $\min\left(\supp(\mathbf w'')\right)$ is a neighbor of $m$, and occurs exactly once by \cref{onetime}.
  However, by \cref{neighboraction} and the fact that $m \in \supp(u)$, the total number of neighbors of $m$ in $\mathbf w''$ must be even.

  These facts imply that $\supp(\mathbf w'')$ contains no neighbors of $m$, except perhaps when $W$ is of type $D_r$, $m= r-2$, and both $r, r-1 \in \supp(\mathbf w'')$. In this case, up to commutation, $(r-2, r-1, r)$ appears in $\mathbf w$, contrary to the fact that $\mathbf w$ is efficient. We conclude that $\mathbf w''$ contains no neighbors of $m$, so
  \[
    \mathbf w \approx \mathbf w''' + (m) + \mathbf w'' \approx \mathbf w''' + \mathbf w'' + (m) = \mathbf w' + (m).\qedhere
  \]
\end{proof}

\subsection{}
The combinatorics of \cref{words} is here applied to analyze irreducible components of the glued spaces $\mathbf{G}_u$, culminating in a proof of \cref{gluedspacelemma}.

Combining \cref{quotientmodel} and discussion of \cref{sec:faces}, we learn
that $H_IF_J$ is a face of $\mathbf{G}_u$ if and only if there is $w \in W_I$ and $v \in (\Z/2)\gen{\lambda_j^\vee : j \in J} \subset \mathcal S$  such that $w(u) + v > u$.
Moreover, the face $H_I F_J$ is a component of $\mathbf{G}_u$ if and only if the pair $(I, J)$ is minimal among the pairs $(I', J')$ such that $H_{I'}F_{J'}$ is a face of $\mathbf{G}_u$.
Minimality implies:
\begin{proposition}\label{witnessprop}
  If $H_IF_J$ is an irreducible component of $\mathbf{G}_u$, then there is a word $\mathbf w \in \SR(u)$ such that $\supp(\mathbf w) = I$ and $s_{\mathbf w}(u) + \sum_{j \in J} \lambda_j^\vee > u$.
  Moreover, either $(I, J) = (\emptyset, \{j\})$ for some $j \not \in \supp(u)$, or
  \[ J = \left\{i \in \supp(u) \setminus \supp(wu) :  i < \min(\supp(wu) \setminus \supp(u))\right\}. \]
\end{proposition}
\begin{proof}
  If $H_I F_J$ is a face of $\mathbf{G}_u$, then there is $w \in W_I $ and $v \in (\Z/2)\gen{\lambda_j^\vee : j \in J}$ such that $w(u) + v > u$.
  If $J$ contains some $j \not \in \supp(u)$, then we may take $w$ to be the identity and $v = \lambda_j^\vee$, so $(I, J) = (\emptyset, \{j\})$ by minimality.

  Otherwise, $J \subset \supp(u)$.
  Since $w \in W_I$, there is a word $\mathbf w$ such that $s_{\mathbf w}(u) = w(u)$ and $\supp(\mathbf w) \subset I$.
  By deleting elements from $\mathbf w$, we may assume that $\mathbf w \in \SR(u)$.

  If $\supp(\mathbf w) \subsetneq I$ or $v \neq \sum_{k \in J} \lambda_k^\vee$, then $(I, J)$ is not minimal, contrary to the fact that $H_I F_J$ is an irreducible component; therefore, $\supp(\mathbf w) = I$ and $v = \sum_{k \in J} \lambda_k^\vee$.
  Likewise, if there is $j \in J$ such that either $w(u)_j = 1$ or such that $j \geq \min \{k : w(u)_k = 1, u_k = 0\}$, then
  $w(u) + v - \lambda_j^\vee > u$,  so $(I, J \setminus j)$ also corresponds to a face of $\mathbf{G}_u$, contrary to the assumption that $H_I F_J$ is an irreducible component.
\end{proof}
In the circumstances of \cref{witnessprop}, call $\mathbf w$ a \word{witness} for $H_IF_J$.

\begin{lemma}\label{supportlem}
  If $W$ is of classical type and
  $H_I F_J$ is an irreducible component of $\mathbf{G}_u$, then $I \subset \supp(u)$.
\end{lemma}
\begin{proof}
  We prove for each of the four types, bootstrapping from type $A_r$.
  \begin{description}
    \item[Type $A_r$]
          Suppose towards a contradiction that there is $m \in I \setminus \supp(u)$.
          Let $\mathbf w$ be a witness for $H_IF_J$, and let $\mathbf w'$ be the connected component of $\mathbf w$ containing $m$.

          Since $\mathbf w'$ is a connected component, $\supp(\mathbf w') = \{k, k+1, \ldots, \ell\}$ for some $k \leq m \leq \ell$.
          If $j \in \supp(\mathbf w') \cap \supp(u)$ and $j-1 \not \in \supp(u)$, then $I = \{j\}$ and $J = \emptyset$ because $H_I F_J$ is a component, contrary to the fact that $m \in I$; therefore, if $j \in \supp(\mathbf w') \cap \supp(u)$, then $j-1 \in \supp(u)$.
          It follows that there is $k' < m$ such that
          \[
          \supp(\mathbf w') \cap \supp(u) = \{k, k+1, \ldots, k'\}.
          \]
          By \cref{reductionlem},
          \[
          \mathbf w' \approx \mathbf w'' + (k),
          \]
          where $\supp(\mathbf w'') = \supp(\mathbf w') \setminus k$.
          Since $k' < m$ and $m \in \supp(\mathbf w')$, $k+1 \in \supp(\mathbf w')$, hence $k+1$ appears exactly once in $\supp(\mathbf w'')$ by \cref{onetime}.
          By \cref{neighboraction}, $s_{\mathbf w}(u)_k = 0$, but $u_k = 1$.

          If $k > c := \min(\supp(wu) \setminus \supp(u))$, then the word $\mathbf w'''$ formed by deleteing $\mathbf w'$ from $\mathbf w$ satisfies $s_{\mathbf w'''}(u) + v > u$ and $\supp(\mathbf w''') \subsetneq I$, contrary to the fact that $H_I F_J$ is an irreducible component; therefore, $k \leq c$.
          Since $k \neq c$, it must be the case that $k < c$; therefore, $k \in J$ by \cref{witnessprop}.
          This is a contradiction because $I \cap J = \emptyset$.
    \item[Type $B_r$] Suppose towards a contradiction that $m \in I \setminus\supp(u)$.
          Since $s_r$ acts trivially on $\mathcal S$, $r$ is not in the support of any efficient word; therefore, the conclusion holds automatically if $m=r$.
          Otherwise, if $m < r$, then the Type A proof goes through verbatim.
    \item[Type $C_r$]
          Suppose towards a contradiction that $m \in I\setminus\supp(u)$.
          No element of $W$ alters the $r$th coordinate of $u$, so $r$ appears in no word efficient on $u$ when $u_r = 0$.
          This handles the case $m = r$.
          Otherwise, if $m < r$, then the Type A proof goes through verbatim.
    \item[Type $D_r$]
          Suppose towards a contradiction that $m \in I \setminus \supp(u)$.
          If $m \leq r-2$, then the Type A proof goes through verbatim.

          Otherwise, suppose $m \in \{r-1, r\}$.
          Let $\mathbf w$ be a witness for $H_IF_J$ and let $\mathbf w'$ be the connected component of $m$.
          Since $\mathbf w'$ is efficient on $u$, $r-2$ appears to the right of $m$ in $\mathbf w'$.
          Arguing as in Type~A,
          \[ (\supp(\mathbf w') \cap \supp(u)) \setminus \{r, r-1\} = \{k, k+1, \ldots, r-2\} \]
          for some $k \leq r-2$.
          Suppose towards a contradiction that $k < r-2$. By \cref{reductionlem},
          \[
          \mathbf w' \approx \mathbf w'' + (k),
          \]
          where $\mathbf w''$ has no copies of $k$ and exactly one copy of $k+1$.
          By \cref{neighboraction}, it follows that $s_{\mathbf w}(u)_k = 0$ but $u_k = 1$, so $k \in J$, contrary to the fact that $I \cap J = \emptyset$.
          Consequently,
          \[ \mathbf w' \approx \mathbf w'' + (r-2), \]
          where $\mathbf w'' \in \{ (r), (r-1), (r, r-1), (r-1, r)\}$.
          If $\mathbf w'' \in \{(r), (r-1)\}$, then $s_{\mathbf w}(u)_{r-2} = 0$ but $u_{r-2} = 1$, so $r-2 \in J$, contrary to the fact that $I \cap J = \emptyset$. Therefore, $\mathbf w'' \approx (r-1, r)$ and $u_{r-1} = u_r = 0$.
          But then $s_{r-1}s_r s_{r-2}(u) = s_{r-2}(u)$, so $\mathbf w$ is not efficient on $u$, a contradiction.
  \end{description}
\end{proof}

\begin{proof}[Proof of \cref{gluedspacelemma}]
  Let $u \in \mathcal S$ with $u \neq \sum_{i=1}^r \lambda_i^\vee $, and let $j \in [r] \setminus \supp(u)$. The face $F_j$ is an irreducible component of $\mathbf G_u$.

  If $W$ is of classical type, and $H_IF_J$ is a component of $\mathbf G_u$, then $j \not \in I$ by \cref{supportlem}; therefore, $H_I F_J \cap F_j \neq \emptyset$.
  By \cref{not_opposite_component}, it follows that $\mathbf G_u$ is contractible.

  For $W$ of exceptional type ($E_6$, $E_7$, $E_8$, $F_4$, $G_2$), we checked that $\mathbf G_u$ is contractible by using a computer to verify the hypothesis of \cref{not_opposite_component}. Our code is available at \cite{Colin_Crowley_and_Tao_Gong_and_Connor_Simpson_A_tool_for}.
\end{proof}

\printbibliography

\end{document}